\numberwithin{equation}{section}
\theoremstyle{plain}
\newtheorem{theorem}{Theorem}[section]
\newtheorem{lemma}[theorem]{Lemma}
\newtheorem{corollary}[theorem]{Corollary}
\theoremstyle{definition}
\newtheorem{Def}[theorem]{Definition}
\newtheorem{example}[theorem]{Example}
\newtheorem{remark}[theorem]{Remark}
\newtheorem{?}[theorem]{Problem}
\newcommand{\abs}[1]{\left|#1\right|}
\newcommand{\set}[1]{\left\{#1\right\}}
\def\mod{\mathrm{mod}}
\def\A{\mathcal{A}}
\def\E{\mathcal{E}}
\def\sol{\mathrm{sol}}
\def\sl{\mathrm{sl}}
\def\la{\lambda}
\def\C{\mathcal{C}}
\def\N{\mathbb{N}}
\def\O{\mathcal{O}}
\def\T{\mathcal{T}}
\def\sfb{\mathrm{sfb}}
\def\gam{\gamma}
\newcommand{\B}{\mathcal{B}}
\newcommand{\D}{\mathcal{D}}
\renewcommand{\P}{\mathcal{P}}
\newcommand{\W}{\mathcal{W}}
\def\RR{\mathcal{RR}}
\def\lrp{\mathrm{lrp}}
\def\sle{\mathrm{sle}}
\begin{document}

\title[Sequences of odd length in strict partitions I]{Sequences of odd length in strict partitions I: the combinatorics of double sum Rogers-Ramanujan type identities}

\author[S. Fu]{Shishuo Fu}
\address[Shishuo Fu]{College of Mathematics and Statistics, Chongqing University, Chongqing 401331, P.R. China}
\email{fsshuo@cqu.edu.cn}

\author[H. Li]{Haijun Li}
\address[Haijun Li]{College of Mathematics and Statistics, Chongqing University, Chongqing 401331, P.R. China}
\email{lihaijun@cqu.edu.cn}

\date{\today}

\begin{abstract}
Strict partitions are enumerated with respect to the weight, the number of parts, and the number of sequences of odd length. We write this trivariate generating function as a double sum $q$-series. Equipped with such a combinatorial set-up, we investigate a handful of double sum identities appeared in recent works of Cao-Wang, Wang-Wang, Wei-Yu-Ruan, Andrews-Uncu, Chern, and Wang, finding partition theoretical interpretations to all of these identities, and in most cases supplying Franklin-type involutive proofs. This approach dates back more than a century to P.~A.~MacMahon's interpretations of the celebrated Rogers-Ramanujan identities, and has been further developed by Kur\c{s}ung\"{o}z in the last decade.
\end{abstract}


\maketitle

\section{Introduction}

First proven by Rogers in 1894 and then rediscovered by Ramanujan sometime before 1913, the Rogers-Ramanujan identities~\cite[Chap.~19]{HW08} state that for $\abs{q}<1$ we have 
\begin{align}\label{id:RR1}
\sum_{n=0}^{\infty}\frac{q^{n^2}}{(q;q)_n} &=\frac{1}{(q;q^5)_{\infty}(q^4;q^5)_{\infty}},\\
\label{id:RR2}
\sum_{n=0}^{\infty}\frac{q^{n^2+n}}{(q;q)_n} &=\frac{1}{(q^2;q^5)_{\infty}(q^3;q^5)_{\infty}}.
\end{align}
Here and in what follows, we adopt the following customary $q$-Pochhammer symbols and mostly follow the notations from Andrews' book~\cite{andtp}. For $\abs{q}<1$, we let
\begin{align*}
& (a;q)_n=(1-a)(1-aq)\cdots(1-aq^{n-1}), \text{ for $n\ge 1$,}\\
& (a;q)_0=1,\text{ and } (a;q)_{\infty}=\lim_{n\to\infty}(a;q)_n.
\end{align*}

The two formulas \eqref{id:RR1} and \eqref{id:RR2} were communicated to MacMahon, who stated them (without proofs) in his magnum opus {\it Combinatory Analysis}~\cite[Vol.~I, Sect.~VII, Ch.~III]{mac15}, in terms of the equinumerosities between two types of restricted partitions.
\begin{theorem}[Rogers-Ramanujan-MacMahon]\label{thm:RR-partition}
For each $n\ge 1$,
\begin{enumerate}
  \item there are as many partitions of $n$ into parts that are mutually at least $2$ apart, as partitions of $n$ into parts that are congruent to $\pm 1$ moduluo $5$;
  \item there are as many partitions of $n$ into parts greater than $1$ that are mutually at least $2$ apart, as partitions of $n$ into parts that are congruent to $\pm 2$ moduluo $5$.
\end{enumerate}
\end{theorem}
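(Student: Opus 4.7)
The plan is to reduce Theorem~\ref{thm:RR-partition} directly to the analytic identities \eqref{id:RR1} and \eqref{id:RR2} by computing, on each side of each equinumerosity, the generating function enumerating the corresponding partitions by weight.

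First I would handle part (1). Grouping partitions of $n$ whose parts differ mutually by at least $2$ according to their number of parts $k$, I observe that the smallest such partition with exactly $k$ parts is $1+3+5+\cdots+(2k-1)=k^2$. Subtracting $2i-1$ from the $i$-th smallest part (for $i=1,\dots,k$) sets up a bijection between these restricted partitions with $k$ parts and ordinary partitions into at most $k$ parts, whose generating function is $1/(q;q)_k$. Summing over $k$ yields the left-hand side of \eqref{id:RR1}. On the other hand, the standard product expansion shows that partitions of $n$ into parts $\equiv\pm1\pmod{5}$ are generated by $1/\bigl((q;q^5)_{\infty}(q^4;q^5)_{\infty}\bigr)$, the right-hand side of \eqref{id:RR1}. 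Equating coefficients of $q^n$ completes the proof of (1).

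The argument for part (2) is parallel. A partition with $k$ parts all greater than $1$ and mutually at least $2$ apart has minimum weight $2+4+\cdots+2k=k^2+k$. Subtracting $2i$ from the $i$-th smallest part gives a bijection with partitions into at most $k$ parts, so the generating function is $\sum_{k\ge 0}q^{k^2+k}/(q;q)_k$, namely the left side of \eqref{id:RR2}. Its right side is the generating function $1/\bigl((q^2;q^5)_{\infty}(q^3;q^5)_{\infty}\bigr)$ for partitions into parts $\equiv\pm2\pmod{5}$, and comparing coefficients of $q^n$ again yields the claim.

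The genuinely hard input is of course the pair of analytic identities \eqref{id:RR1} and \eqref{id:RR2}, but these are taken as given; the main task in proving Theorem~\ref{thm:RR-partition} itself is therefore just the two elementary ``subtract a staircase'' bijections on the sum side and the standard product form on the infinite-product side. No additional obstacle arises, and this is precisely the template whose iterated, refined use for double-sum identities is the subject of the rest of the paper.
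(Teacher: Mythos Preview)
Your proposal is correct and matches the paper's own treatment: the paper also takes the analytic identities \eqref{id:RR1}--\eqref{id:RR2} as given, interprets the product sides as the obvious residue-class generating functions, and in Section~\ref{sec:bij} derives the sum-side interpretation for part~(1) via exactly the ``subtract the staircase $1+3+\cdots+(2n-1)$'' bijection you describe (what the paper calls the b$+$i-decomposition with base $\beta^{(n)}$ and incremental partition $\iota$). The paper does not spell out part~(2), but your parallel argument with base $2+4+\cdots+2k$ is the evident analogue.
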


Proclaimed by Hardy~\cite[p.~28]{har37} as ``most remarkable'', this pair of identities \eqref{id:RR1}, \eqref{id:RR2}, and their combinatorial counterpart Theorem~\ref{thm:RR-partition} have spawn an enormous amount of work for over a century, spreading all over the subjects such as representation theory, quantum physics, etc. We direct the reader to Sills' book~\cite{sil18} for references and further information. 


It is worth noting that while it is straightforward to see that the right hand side of \eqref{id:RR1} is the generating function for partitions into parts congruent to $1$ or $4$ moduluo $5$ as described in Theorem~\ref{thm:RR-partition} (1), it does require some further explanation to see that the other set of restricted partitions (the one with the ``gap condition'') mentioned in Theorem~\ref{thm:RR-partition} (1) is indeed generated by the left hand side of \eqref{id:RR1}. We shall complete this task in the next section via the ``base $+$ increments'' framework (see remark~\ref{rmk:Kagan's credit} and the discussion before it) and treat it as the prototype of most combinatorial constructions that show up later. In general, it is often the case that $q$-series identities resembling \eqref{id:RR1} and \eqref{id:RR2} were derived first via assorted methods without any partition theoretical interpretations, and only later such interpretations were supplied and direct bijective proofs were constructed in some of the cases. In the present work, we aim to initiate such combinatorial investigations on the following three Rogers-Ramanujan type identities, previously established by Cao-Wang~\cite{CW23} and Wang-Wang~\cite{WW23} via integral method.
\begin{theorem}\label{thm:three (1,2)}
(Cf. \cite[Th.~3.8]{CW23} and \cite[Th.~3.1]{WW23}) We have
\begin{align}
\sum\limits_{i, j\geq 0}\frac{(-1)^{j}u^{i+j}q^{i^2+2ij+2j^2}}{(q;q)_{i}(q^2;q^2)_{j}} &=(-uq; q^2)_{\infty},\label{id:(1,2)-1}\\
\sum_{i, j\geq 0}\frac{u^{i+2j}q^{i^2+2ij+2j^2+j}}{(q; q)_{i}(q^2; q^2)_{j}} &=(-uq; q)_{\infty},\label{id:(1,2)-2}\\
\sum_{i, j\geq 0}\frac{u^{i+2j}q^{2i^2+4ij+4j^2-3i}}{(q^2; q^2)_{i}(q^4; q^4)_{j}} &=(1+uq+uq^{-1})(-uq^3; q^2)_{\infty}.\label{id:(1,2)-3}
\end{align}
\end{theorem}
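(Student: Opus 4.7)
The proof strategy follows the ``base plus increments'' framework that the paper will install in the next section, extending MacMahon's combinatorial interpretation of the Rogers--Ramanujan identities to double sums via the statistic of sequences of odd length. For each identity, the plan is to interpret the double sum on the left-hand side as enumerating strict partitions equipped with an internal decomposition indexed by $(i,j)$: the quadratic $q$-exponent measures the minimal weight of a structured base determined by $i$ and $j$, the factor $u^{i+j}$ (respectively $u^{i+2j}$) tracks the number of parts, and the denominators $1/(q;q)_i$ and $1/(q^2;q^2)_j$ generate ordinary partitions whose blocks are added back as increments in prescribed positions. The right-hand sides are then recognized as generating functions for simpler classes of strict partitions: distinct odd parts for \eqref{id:(1,2)-1}, distinct parts for \eqref{id:(1,2)-2}, and a trichotomous variant of distinct odd parts for \eqref{id:(1,2)-3}, where the prefactor $(1+uq+uq^{-1})$ absorbs a small boundary piece.

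With this dictionary in place, the proofs of \eqref{id:(1,2)-2} and \eqref{id:(1,2)-3}, whose left-hand sides are positive sums, would proceed by constructing explicit weight- and part-preserving bijections: an input consists of a base strict partition of the prescribed $(i,j)$-shape together with two ordinary partitions of sizes at most $i$ and at most $j$, and the bijection absorbs these partitions into the base by merging or lifting adjacent parts, producing as output an unrestricted strict partition (respectively an unrestricted strict partition into odd parts, modulo the prefactor). For \eqref{id:(1,2)-1}, whose left-hand side carries the alternating sign $(-1)^j$, the natural approach is a Franklin-type sign-reversing involution toggling $j\mapsto j\pm 1$ while creating or collapsing a designated pair of adjacent parts and preserving both the weight and the total number of parts $i+j$; its fixed points should be exactly the strict partitions into distinct odd parts enumerated by $(-uq;q^2)_\infty$. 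The rewriting $i^2+2ij+2j^2 = (i+j)^2 + j^2$ is what makes this involution plausible: $(i+j)^2$ sets up the staircase $1,3,5,\dots,2(i+j)-1$ of distinct odd parts, and the extra $j^2$ is precisely the weight absorbed or released by toggling $j$.

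The main obstacle will be \eqref{id:(1,2)-3}. The negative contribution $-3i$ in the exponent, combined with the shift to the bases $(q^2;q^2)_i$ and $(q^4;q^4)_j$, forces the intermediate base partitions to have non-positive slots at the bottom, and the prefactor $(1+uq+uq^{-1})$ on the right-hand side is not a clean infinite product, so the bijection must carve out three boundary cases according to the occupancy of the three lowest slots and match them cleanly with the three terms $1$, $uq$, $uq^{-1}$. Getting this bottom-of-partition case analysis to line up, while keeping the bulk of the argument parallel to that of the simpler \eqref{id:(1,2)-2}, is where the most delicate work will concentrate; by contrast, once the base-plus-increments dictionary is installed, \eqref{id:(1,2)-1} and \eqref{id:(1,2)-2} should follow in a fairly mechanical fashion from the scheme above.
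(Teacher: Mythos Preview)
Your plan for \eqref{id:(1,2)-2} matches the paper exactly: once the generating function $D^{\sol,\ell}(x,y;q)$ is established by the base-plus-increments bijection, setting $x=1$, $y=u$ gives \eqref{id:(1,2)-2} immediately.

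For \eqref{id:(1,2)-1} you have the right architecture---a Franklin-type involution whose fixed points are the strict odd partitions---but the paper's implementation differs from your sketch. Rather than toggling $j$ directly inside strict partitions, the paper first reinterprets the summand via a \emph{different} b$+$i-decomposition, with base $[1,1],[3,3],\ldots,[2j{-}1,2j{-}1],(2j{+}1),\ldots,(2j{+}2i{-}1)$, landing in a class $\C_{i,j}$ of partitions where each part appears at most twice and adjacent distinct parts differ by at least $2$. The involution $\psi$ then acts on $\C=\bigcup_{i,j}\C_{i,j}$ by comparing the largest repeated part against the smallest element of the largest ``even run'' of non-repeated even singletons, merging a pair into a single even part or splitting an even part into a pair accordingly; the fixed set is $\O\D$. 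Incidentally, your rewriting $i^2+2ij+2j^2=(i+j)^2+j^2$ is actually the germ of the paper's separate \emph{analytic} proof (summing over $j$ first and invoking $q$-Chu--Vandermonde), not of the involution.

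Where your forecast goes astray is \eqref{id:(1,2)-3}. The paper does \emph{not} build a delicate three-case bijection to absorb the prefactor $1+uq+uq^{-1}$. Instead, after the substitution $u\to uq^3$, $q^2\to q$, it checks algebraically that both sides of the resulting identity satisfy the same relation $X(u)-u^2q^3X(uq^2)=(1+uq+uq^2)(-uq^3;q)_\infty$ with $X(0)=1$, so by uniqueness of solutions to this $q$-difference equation \eqref{id:(1,2)-3} is \emph{equivalent} to \eqref{id:(1,2)-2}. Far from being the main obstacle, it comes essentially for free once \eqref{id:(1,2)-2} is in hand; the genuinely delicate work in this section is the construction of $\psi$ for \eqref{id:(1,2)-1}.
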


Our combinatorial approach to interprete and derive the above three identities centers around the number of sequences of odd length in strict partitions, a notion that we believe is systematicly studied here for the first time. We begin by recalling some basic notations in the theory of integer partitions.

For a given non-negative integer $n$, a {\it partition} $\lambda$ of $n$ is a weakly increasing list\footnote{The more common convention is to require such a list to be weakly decreasing. We define it this way to facilitate the description of later operations on partitions.} of positive integers that sum up to $n$. We write $\lambda=\lambda_{1}+\lambda_{2}+\cdots+\lambda_{m}$ with $\lambda_{1}\leq \lambda_{2}\cdots\leq\lambda_{m}$, where the {\it weight} of $\lambda$ will be denoted by $|\lambda|=n$ and each $\lambda_{i}$ is called a {\it part} of $\lambda$ for $1\leq i\leq m$. The number of parts $m$ is called the {\it length} of the partition $\lambda$ and is denoted by $\ell(\lambda)$. We also require two variants of length. Let $\ell_d(\la)$ be the number of different part sizes that occur in $\la$, and let $\ell_r(\la)$ be the number of repeated parts in $\la$. Let $f_{i}$ be the number of times $i$ appears as a part in $\lambda$, for $1\leq i\leq n$. Denote by $\P(n)$ the set of all partitions of $n$, while its cardinality $|\P(n)|$ is denoted as $p(n)$ and we set $\P:=\bigcup_{n\ge 0}\P(n)$. For a given partition $\lambda$, its {\it Ferrers diagram}~\cite[p.~7]{andtp} is a graphical representation, denoted as $[\lambda]$, using left-justified rows of unit cells, such that the $i$-th row (from bottom up) consists of $i$ cells. For example, the Ferrers diagram of $2+4+7+7+9$ is shown in Figure~\ref{fig:Ferrers}. 
\begin{figure}[h!]
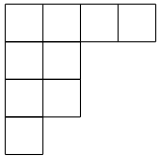

\begin{ferrers}
\addcellrows{4+2+2+1}
\end{ferrers}
\caption{The Ferrers diagram $[\lambda]$ for $\lambda=1+2+2+4$.}
\label{fig:Ferrers}
\end{figure}

Strict partitions refer to those who do not allow repetitions among the parts, or alternatively, those satisfying $f_i\le 1$ for all $i$. We denote the set of strict partitions by $\D$, and similarly $\D(n)$ stands for the subset wherein all partitions are of weight $n$. The following is the most important definition of this paper.

\begin{Def}
Given a strict partition $\la\in\D$, a maximal string of consecutive parts contained in $\la$ is called a {\it sequence} of $\la$. We denote the number of sequences in $\la$ that are of odd length by $\sol(\la)$. We define the generating function of strict partitions with respect to the weight, the length, and the number of sequences of odd length as
\begin{align*}
D^{\sol,\ell}(x,y;q) &:=\sum_{\la\in\D}x^{\sol(\la)}y^{\ell(\la)}q^{\abs{\la}}\\
&=1+xyq+xyq^2+(xy+y^2)q^3+(xy+x^2y^2)q^4+\cdots.
\end{align*}
\end{Def}

Clearly, each strict partition splits into sequences in a unique fashion. For instance, the partition $\la=2+4+5+8+9+10$ breaks into three sequences: $(2)$, $(4,5)$, and $(8,9,10)$, among which only two have odd lengths. Thus we see $\sol(\la)=2$ and $\la$ contributes $x^2y^6q^{38}$ to the generating function $D^{\sol,\ell}(x,y,q)$. 

The study of sequences in partitions dates back to Sylvester~\cite[Th.~2.12]{syl82} and MacMahon~\cite[Vol.~II, Sect.~VII, Ch.~IV]{mac15}; see Andrews' paper \cite[Sect.~2]{and20} for a brief history on previous work involving the sequences in partitions. Our treatment of sequences in strict partitions with parity consideration proves to be crucial for understanding the three identities in Theorem~\ref{thm:three (1,2)} from a combinatorial perspective. The following double sum expression for $D^{\sol,\ell}(x,y,q)$ is one of the main results of this paper.

\begin{theorem}\label{thm:main}
We have 
\begin{align}\label{gf:l-sol}
D^{\sol,\ell}(x,y;q) &= \sum_{i,j\ge 0}\frac{x^iy^{i+2j}q^{i^2+2ij+2j^2+j}}{(q;q)_i(q^2;q^2)_j}.
\end{align}
\end{theorem}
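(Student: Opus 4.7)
The plan is to apply the ``base$+$increments'' framework highlighted in the introduction. First observe that even-length sequences contribute even numbers to $\ell(\la)$ while odd-length sequences contribute odd numbers, so $\ell(\la)\equiv\sol(\la)\pmod 2$ for every $\la\in\D$. Setting $j=(\ell(\la)-\sol(\la))/2$ and $\D_{i,j}=\{\la\in\D:\sol(\la)=i,\ \ell(\la)=i+2j\}$, identity \eqref{gf:l-sol} is equivalent to the family of identities
$$\sum_{\la\in\D_{i,j}}q^{|\la|}=\frac{q^{i^2+2ij+2j^2+j}}{(q;q)_i(q^2;q^2)_j},\qquad i,j\ge 0.$$

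I would first identify the base: the unique minimum-weight element of $\D_{i,j}$. For $i\ge 1$ this is the staircase
$$\la^{(0)}_{i,j}=(1,2,\ldots,2j+1,\,2j+3,\,2j+5,\ldots,2j+2i-1),$$
namely a length-$(2j+1)$ sequence followed by $i-1$ singletons at gaps of two; for $i=0$ it is simply $(1,2,\ldots,2j)$. A routine telescoping sum confirms $|\la^{(0)}_{i,j}|=i^2+2ij+2j^2+j$, matching the exponent in the numerator.

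The core of the proof is then a weight-preserving bijection
$$\phi:\ \D_{i,j}\ \longleftrightarrow\ \P^{\le i}\times\P^{\mathrm{ev},\le j},$$
where $\P^{\le i}$ is the set of partitions with at most $i$ parts and $\P^{\mathrm{ev},\le j}$ is the set of partitions with at most $j$ parts, all even, satisfying $|\la|=|\la^{(0)}_{i,j}|+|\alpha|+|\beta|$ whenever $\phi(\la)=(\alpha,\beta)$. The idea is to decompose each odd-length sequence of $\la$ into a train of consecutive pair-atoms capped by a top singleton-atom, and each even-length sequence into consecutive pair-atoms, producing exactly $i$ singletons and $j$ pairs. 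Listing the singletons as $\sigma_1<\cdots<\sigma_i$ and the pairs by their lower elements $\pi_1<\cdots<\pi_j$, the base occupies positions $\pi_k=2k-1$ and $\sigma_k=2j+2k-1$. For a general $\la$, set $\alpha_k=\sigma_k-(2j+2k-1)$ and $\beta_k=2(\pi_k-(2k-1))$ whenever all these shifts are nonnegative; when a singleton gets pushed below a pair (the atom order flips), a Sylvester-type straightening move absorbs the signed shifts into a legitimate pair of partitions with weakly-increasing nonnegative parts. Verifying small cases such as $\D_{1,1}$ and $\D_{2,1}$ by hand suggests that each possible atom-order pattern corresponds to a wedge-shaped region of $\N^i\times(2\N)^j$ and these wedges partition the target space. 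Combined with the standard formulas $\sum_{\alpha\in\P^{\le i}}q^{|\alpha|}=1/(q;q)_i$ and $\sum_{\beta\in\P^{\mathrm{ev},\le j}}q^{|\beta|}=1/(q^2;q^2)_j$, $\phi$ then yields \eqref{gf:l-sol} upon multiplication by $x^iy^{i+2j}$ and summation over $i,j\ge 0$.

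The principal obstacle is formulating the straightening step in $\phi$ explicitly and proving its invertibility in full generality: a pair may merge with a neighboring singleton to lengthen an odd-length sequence, or detach from it to become its own even-length sequence, so the bijection must track sequence-structure mutations uniformly as the shift parameters vary. Once the straightening is pinned down and the additive weight identity $|\la|=|\la^{(0)}_{i,j}|+|\alpha|+|\beta|$ is checked across all wedges, the remainder of the argument reduces to routine generating-function bookkeeping.
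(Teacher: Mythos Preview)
Your framework coincides with the paper's: same decomposition $\D=\bigsqcup_{i,j}\D_{i,j}$, same base partition $\la^{(0)}_{i,j}=\beta^{(i,j)}$, same atom decomposition into $i$ singletons and $j$ pairs, and the same target sets $\P^{\le i}\times\P^{\mathrm{ev},\le j}$ (the paper writes these as $\P_i\times\E_j$). So the approach is essentially identical to the paper's Lemma~\ref{lem:main bij}.

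The gap is exactly where you locate it yourself. Your naive formulas $\alpha_k=\sigma_k-(2j+2k-1)$ and $\beta_k=2(\pi_k-(2k-1))$ fail whenever a singleton sits below a pair (e.g.\ $\la=1+[3,4]\in\D_{1,1}$ gives $\alpha_1=-2$), and you leave the required ``Sylvester-type straightening'' unspecified, merely noting that small cases suggest a wedge decomposition of $\N^i\times(2\N)^j$ indexed by atom-order patterns. That is not yet a proof: there are $\binom{i+j}{j}$ such patterns, and you would need to exhibit the wedges, show they are disjoint, show they cover, and check the weight identity on each. This is the entire content of the lemma.

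The paper sidesteps the wedge bookkeeping by constructing the bijection \emph{dynamically} in the forward direction $\{\beta^{(i,j)}\}\times\P_i\times\E_j\to\D_{i,j}$: first push the singletons forward by the parts of $\mu$ (no collisions possible), then push the pairs forward one unit at a time by the parts of $\eta$, and whenever a moving pair $[t+1,t+2]$ collides with a singleton $t+2$, apply a local weight-zero ``adjustment'' swapping them to $t,[t+2,t+3]$. The inverse uses the obvious backward moves together with a ``normalization'' that undoes each adjustment. Because each step is a local, invertible, weight-preserving rewrite, bijectivity and the weight identity are immediate without any case analysis over atom orderings. Supplying this (or an equivalent explicit rule) is what your proposal still needs.
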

Basing on the aforementioned ``base $+$ increments'' framework, we present in the next section a bijective proof of \eqref{gf:l-sol}. Consequently, the series sides of the three identities in Theorem~\ref{thm:three (1,2)} could then be identified as specializations of the double $q$-series appearing in \eqref{gf:l-sol}. We re-derive these three identities in section~\ref{sec:3id} via combinatorial arguments. In particular, the proof of \eqref{id:(1,2)-1} features a Franklin-type involution. After that, as further applications of our combinatorial viewpoint, we investigate three more identities concerning double sum $q$-series.

In a recent work by Wei-Yu-Ruan~\cite{WYR23}, the following parametrized generalization of both \eqref{id:(1,2)-1} and \eqref{id:(1,2)-2} was derived via the contour integral method.

\begin{theorem}\label{thm:Wei-2para}
(Cf. \cite[Theorem 1.1]{WYR23}) Let $x, y$ be complex numbers. Then
\begin{equation}
\sum\limits_{i, j\geq 0}\frac{q^{i^2+2ij+2j^2-i-j}}{(q; q)_{i}(q^2; q^2)_{j}}x^{i}y^{2j}=(y; q)_{\infty}\sum\limits_{j\geq 0}\frac{(-x/y; q)_{j}}{(q; q)_{j}(y; q)_{j}}q^{\binom{j}{2}}y^{j}.\label{id:Wei-2para}
\end{equation} 
\end{theorem}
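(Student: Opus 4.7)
Since \eqref{id:Wei-2para} has two free parameters $x,y$ beyond $q$, a Franklin-type involution seems unnatural; the plan is a direct $q$-series derivation that starts from the right-hand side and collapses to the left-hand side via Euler's identity, the $q$-binomial theorem, and a classical alternating-sum evaluation for Gaussian binomials. It is worth recording up front (though not logically needed) that by matching exponents, the LHS equals $D^{\sol,\ell}(x/y,\, y/q;\, q)$ in the notation of Theorem~\ref{thm:main}, which places this identity inside the combinatorial framework developed earlier in the paper.

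First peel off $(y;q)_\infty/(y;q)_j = (yq^j;q)_\infty$ and expand by Euler,
\[
(yq^j;q)_\infty = \sum_{k\ge 0}\frac{(-y)^k q^{jk+\binom{k}{2}}}{(q;q)_k}.
\]
Then expand $(-x/y;q)_j = \sum_{\ell=0}^j\binom{j}{\ell}_q q^{\binom{\ell}{2}}(x/y)^\ell$ by the $q$-binomial theorem and absorb $\binom{j}{\ell}_q/(q;q)_j = 1/[(q;q)_\ell(q;q)_{j-\ell}]$. Substituting $m=j-\ell$ and using the telescoping identity $\binom{\ell+m}{2}+(\ell+m)k+\binom{k}{2}=\binom{\ell+m+k}{2}$, the RHS of \eqref{id:Wei-2para} becomes the triple sum
\[
\sum_{\ell,m,k\ge 0}\frac{(-1)^k\, x^\ell y^{m+k}q^{\binom{\ell}{2}+\binom{\ell+m+k}{2}}}{(q;q)_\ell(q;q)_m(q;q)_k}.
\]

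Setting $n=m+k$ extracts the inner alternating sum $\frac{(-1)^n}{(q;q)_n}\sum_{m=0}^n(-1)^m\binom{n}{m}_q$, which vanishes for odd $n$ by the symmetry $\binom{n}{m}_q=\binom{n}{n-m}_q$ and equals $(q;q^2)_{n/2}$ for even $n$ by a classical Gauss-type evaluation. Writing $n=2k'$ and using $(q;q)_{2k'}=(q;q^2)_{k'}(q^2;q^2)_{k'}$, the triple sum collapses to
\[
\sum_{\ell,k'\ge 0}\frac{x^\ell y^{2k'}q^{\binom{\ell}{2}+\binom{\ell+2k'}{2}}}{(q;q)_\ell(q^2;q^2)_{k'}},
\]
and expanding $\binom{\ell}{2}+\binom{\ell+2k'}{2}=\ell^2+2\ell k'+2k'^2-\ell-k'$ matches the LHS of \eqref{id:Wei-2para} with $i=\ell,\,j=k'$. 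The main obstacle is the bookkeeping across the three reindexings, together with the precise invocation of the alternating Gaussian-binomial identity $\sum_{k=0}^n(-1)^k\binom{n}{k}_q=(q;q^2)_{n/2}\cdot\mathbb{1}[n\text{ even}]$; both are standard once spelled out, and all interchanges of summation are legitimate at the level of formal power series in $q,x,y$.
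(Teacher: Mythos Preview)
Your proof is correct. The key steps---Euler's expansion of $(yq^j;q)_\infty$, the $q$-binomial expansion of $(-x/y;q)_j$, the reindexing $m=j-\ell$ and $n=m+k$, the alternating-sum evaluation $\sum_{k=0}^n(-1)^k\binom{n}{k}_q=(q;q^2)_{n/2}\cdot\mathbb{1}[n\text{ even}]$, and the final exponent computation---all check out.

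However, your approach is entirely different from the paper's. You open by asserting that ``a Franklin-type involution seems unnatural'' because of the two free parameters, but this is precisely what the paper does. After the change of variables $x\to xq$, $y\to yq$, the paper interprets the left side as $\sum_{\lambda\in\D}w(\lambda)q^{|\lambda|}$ where each part of $\lambda$ is weighted by $x$ or $y$ according to whether it is the largest part of an o-sequence, and interprets the right side as a signed sum over pairs $(\lambda,\mu)\in\A_j\times\B_j$ of labeled strict partitions. It then builds a sign-reversing involution $\theta$ that deletes or inserts an L-shape from $\lambda$ into $\mu$ (or vice versa), governed by a new statistic $\sfb(\lambda)$ measuring the first ``bad'' sequence. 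The two parameters are handled by carrying the $x$/$y$ labels through the involution.

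Your argument is considerably shorter, self-contained, and arguably more robust: it avoids inventing the notions of good/bad sequences, L-shapes, and $\sfb$. The paper's argument, by contrast, is what the paper \emph{wants}---the whole point of Section~\ref{sec:Wei} is to bring \eqref{id:Wei-2para} inside the combinatorial framework of sequences in strict partitions, so that the identity becomes a statement about cancellation of weighted partition pairs rather than a $q$-series manipulation. Your proof would sit naturally as an analytic companion (much as the paper includes an analytic proof of \eqref{id:(1,2)-1} alongside the involution $\psi$), but it does not replace the bijective content.
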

Note that the left hand side of \eqref{id:Wei-2para} is essentially the right hand side of \eqref{gf:l-sol} upon obvious change of variables. We are able to interprete the right hand side of \eqref{id:Wei-2para} as the generating function for certain partition pairs, and to construct a killing involution on these pairs so as to explain \eqref{id:Wei-2para} combinatorially. These will be accomplished in section~\ref{sec:Wei}.

Meanwhile, the following two double sum $q$-series identities resemble those three found in Theorem~\ref{thm:three (1,2)}, with $(q^2;q^2)_j$ replaced by $(q^3;q^3)_j$ and the exponents of $q$ in the numerator adjusted accordingly. Andrews and Uncu \cite{AU23} derived \eqref{id:AU-1} via $q$-difference equations, while \eqref{id:AU-2} was raised as a conjecture in that same paper. Two independent proofs of \eqref{id:AU-2} can be found in subsequent works of Chern~\cite{che23} and Wang~\cite{wan23}, respectively.
 
\begin{theorem}(Cf. \cite[Th. 1.1 and Conj. 1.2]{AU23})\label{thm:AU}
We have
\begin{align}
\sum_{i, j \ge 0} \frac{(-1)^j q^{i^2+3ij+\frac{3j(3j+1)}{2}}}{(q;q)_i (q^3;q^3)_j} &=\frac{1}{(q;q^3)_{\infty}},\label{id:AU-1}\\
\sum_{i, j \ge 0} \frac{(-1)^j q^{\frac{3 j(3 j+1)}{2}+i^2+3 i j+i+j}}{(q ; q)_i(q^3 ; q^3)_j} &=\frac{1}{(q^2, q^3 ; q^6)_{\infty}}.\label{id:AU-2}
\end{align}
\end{theorem}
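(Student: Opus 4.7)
The plan is to adapt the base+increments framework of Theorem~\ref{thm:main} to the present setting, where $(q^2;q^2)_j$ has been replaced by $(q^3;q^3)_j$ and the $q$-exponent in the numerator adjusted accordingly. For each of \eqref{id:AU-1} and \eqref{id:AU-2}, I would first interpret the summand
\[
\frac{q^{i^2 + 3ij + \frac{3j(3j+1)}{2}}}{(q;q)_i (q^3;q^3)_j}
\]
(and its companion with the extra factor $q^{i+j}$) as the generating function for a class of partition-theoretic objects indexed by $(i,j)$: the base weight corresponds to a distinguished minimum-weight configuration, while $\frac{1}{(q;q)_i}$ and $\frac{1}{(q^3;q^3)_j}$ produce two independent streams of increments. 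The presence of $(q^3;q^3)_j$ strongly indicates that this second stream should enlarge \emph{triple} structures---sequences of length three, or blocks of three consecutive parts moving together in unit steps---so that each application raises the total weight by a multiple of~$3$.

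The sign factor $(-1)^j$ then calls for a Franklin-type sign-reversing involution $\phi$ on the resulting set of configurations, in the spirit of the involutive proof of \eqref{id:(1,2)-1} to be given in section~\ref{sec:3id}. The map $\phi$ should act on one triple structure at a canonical location, producing a companion object with $j$ changed by $\pm 1$ and $i$ shifted in the opposite direction so that the total weight is preserved and the sign is flipped. A natural candidate is to scan for the smallest \emph{unstable} triple and either extract it from a long sequence or, conversely, absorb it into a neighbouring sequence; these two operations are mutual inverses, and configurations admitting neither move are the fixed points.

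It then remains to verify that the generating function over the fixed points matches the right-hand side of each identity: partitions with parts $\equiv 1 \pmod{3}$ for \eqref{id:AU-1}, giving $\frac{1}{(q;q^3)_\infty}$, and partitions with parts $\equiv 2$ or $3 \pmod{6}$ for \eqref{id:AU-2}, giving $\frac{1}{(q^2,q^3;q^6)_\infty}$. Ideally a single master involution handles both identities in parallel, with a mild parameter shift reflecting the extra $q^{i+j}$ in \eqref{id:AU-2}.

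The principal obstacle is the explicit design of $\phi$. In the length-$2$ cases treated earlier, the even/odd parity of sequence lengths in strict partitions paired naturally with $(q^2;q^2)_j$; no such clean parity substitute is available modulo~$3$, and the residue-class structure has to be threaded through the geometry of the partition in a way compatible with the base interpretation. The difficulty is amplified by the fact that the known proofs of \eqref{id:AU-2} by Chern~\cite{che23} and Wang~\cite{wan23} are purely analytic, offering no combinatorial template to adapt. Once the correct case analysis---distinguishing, say, when a long sequence can be split into a triple plus a remainder versus when a triple can be absorbed into its neighbour---is set up, verifying $\phi^2 = \mathrm{id}$ and the weight book-keeping should reduce to systematic checking.
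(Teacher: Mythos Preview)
Your proposal aims higher than what the paper actually accomplishes for this theorem. The paper does \emph{not} prove Theorem~\ref{thm:AU}; it merely quotes the identities as established elsewhere (Andrews--Uncu for \eqref{id:AU-1}, Chern and Wang for \eqref{id:AU-2}). What the paper does in section~\ref{sec:Andrews-Uncu} is exactly the first half of your plan: it sets up the $(k,a)$-strict partition framework, identifies the base partition, and shows via a b$+$i-decomposition (Lemma~\ref{lem:amodk bij}) that the series sides of \eqref{id:AU-1} and \eqref{id:AU-2} are specializations of $D^{\sl,\ell}_{3,1}(x,y;q)$. From this the authors extract Corollaries~\ref{cor:AU1 par} and~\ref{cor:AU2 par}, partition-theoretic statements \emph{equivalent} to \eqref{id:AU-1} and \eqref{id:AU-2}, and then stop.

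The second half of your plan---the Franklin-type involution $\phi$ that would actually prove the identities---is precisely what the paper declares open. The authors write explicitly that ``a Franklin-type involutive proof of it is highly desired'' after Corollary~\ref{cor:AU1 par}, and at the close of section~\ref{sec:Andrews-Uncu} they state that such proofs ``have eluded us so far and are highly desired.'' Your own diagnosis of the obstacle is accurate: the modulo-$3$ structure offers no clean parity substitute, and the paper's successful involution $\psi$ for \eqref{id:(1,2)-1} relied heavily on the even/odd dichotomy of repeated parts. So your proposal is not wrong in spirit, but you should be aware that you are proposing to solve a problem the authors leave open, not to reproduce their argument.
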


As we shall demonstrate in section~\ref{sec:Andrews-Uncu}, our combinatorial framework can be easily adapted to give partition theoretical interpretations to certain parametrized double series that specializes to the left hand sides of both \eqref{id:AU-1} and \eqref{id:AU-2}. The paper concludes with outlook for future works.


\section{A combinatorial framework and a bijective proof of Theorem~\ref{thm:main}}\label{sec:bij}
Recall that the {\it sum} of two partitions is understood as the usual partwise summation. I.e., for two partitions $\la$ and $\mu$, their sum $\la+\mu$ is taken to be the partition whose $i$-th part is given by $\la_i+\mu_i$. Notice that for convenience, we may want to append as many zeros to a partition as we see fitting.

In this section, we resume the discussion after Theorem~\ref{thm:RR-partition} to explain that the series side of \eqref{id:RR1}, namely $\sum_{n\ge 0}q^{n^2}/(q;q)_n$, is indeed the generating function for $\RR$, the set of partitions whose parts are mutually at least $2$ apart. In fact, the summand $q^{n^2}/(q;q)_n$ generates the subset 
$$\RR_n:=\set{\la\in\RR: \ell(\la)=n}.$$
To see this, we rewrite the numerator as $q^{n^2}=q^{1+3+\cdots+(2n-1)}$, and realize that the partition in $\RR_n$ with the smallest weight is precisely given by
$$\beta^{(n)}:=1+3+5+\cdots+(2n-1).$$
We shall refer to $\beta^{(n)}$ as the {\it base partition} of the set $\RR_n$. Now for each partition $\la\in\RR_n$, we associate with it a unique partition $\iota=\iota_1+\iota_2+\cdots+\iota_n$ such that $\iota_i=\la_i-(2i-1)$ for $1\le i\le n$, or equivalently $\la=\beta^{(n)}+\iota$. Note that $\iota$ may contain zeros as parts and it has at most $n$ non-zero parts, thus is clearly seen to be generated by $1/(q;q)_n$. One sees that for a fixed $n$, the correspondence $\la\mapsto \iota$ is actually a bijection, from $\RR_n$ to $\P_{\le n}:=\set{\mu\in\P: \ell(\mu)\le n}$. We call the auxiliary partition $\iota$ the {\it incremental partition} and view the relation $\la=\beta^{(n)}+\iota$ as the ``base $+$ increments'' decomposition. An example of this decomposition is illustrated in Fig.~\ref{fig:decomp} below, where the second Ferrers diagram has been $2$-indented to highlight the staircase base partition.

\begin{figure}[ht]
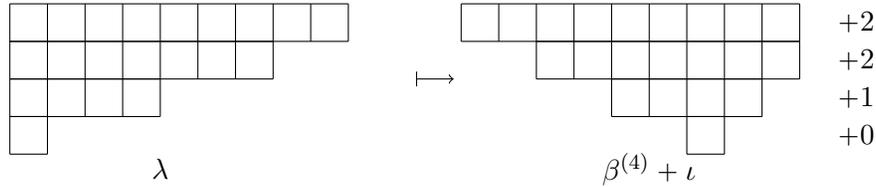

\begin{ferrers}
\addcellrows{9+7+4+1}
\addtext{2}{-2.2}{$\la$}
\transformto{}
\putright
\addcellrow[0]{9}
\addcellrow[2]{7}
\addcellrow[4]{4}
\addcellrow[6]{1}
\highlightcellbyletter{1}{11}{$+2$}
\highlightcellbyletter{2}{11}{$+2$}
\highlightcellbyletter{3}{11}{$+1$}
\highlightcellbyletter{4}{11}{$+0$}
\addtext{2.5}{-2.2}{$\beta^{(4)}+\iota$}
\end{ferrers}
\caption{The correspondence between $\lambda=1+4+7+9$ and $\iota=0+1+2+2$.}
\label{fig:decomp}
\end{figure}

Generally speaking, this ``base $+$ increments'' framework, with the above discussion being its prototype, can be described as follows. For a specific $q$-series (most likely double series in the sequel) that we would like to give a partition theoretical interpretation, we first identify its numerator as the weight for the unique base partition in a certain subset of restricted partitions. Then we examine the incremental partition(s) incurred by the denomenator and explain how to incorporate them into the base partition. Finally, we need to justify that the whole process is well-defined and reversible. In what follows, we simply call this process a b$+$i-decomposition.

\begin{remark}\label{rmk:Kagan's credit}
This whole ``base $+$ increments'' machinery aligns with MacMahon's interpretations of Rogers-Ramanujan identities given in Theorem~\ref{thm:RR-partition}, and has its genesis in the work of Kur\c{s}ung\"{o}z~\cite{kur10}, where a new combinatorial construction was given to the multiple series side of the famous Andrews-Gordon identity; see \cite{kur19,kur21,KS22} for several recent applications of this approach to derive evidently positive series for generating various kinds of restricted partitions.
\end{remark}

Now we proceed to outline the b$+$i-decomposition for strict partitions and establish Theorem~\ref{thm:main}. For $i,j\ge 0$, we define 
$$\D_{i,j}:=\set{\la\in\D: \sol(\la)=i, \text{ and }\ell(\la)=i+2j}.$$

The first step is to identify the base partition, say $\beta^{(i,j)}$, in $\D_{i,j}$. To minimize the weight, there should be no sequences of even length in $\beta^{(i,j)}$ since adjoining them with neighboring sequences could reduce the weight while preserving the statistics $\ell$ and $\sol$. For instance, both $1+2+4$ and $1+3+4$ should be turned into $1+2+3$. The following comparison for $b\ge 1$ and $c\ge 3$
\begin{align*}
&\underbrace{a+(a+1)+\cdots+(a+b+1)}_{b+2}+\underbrace{(a+b+3)+(a+b+4)+\cdots+(a+b+c)}_{c-2}\\
&\quad <\underbrace{a+(a+1)+\cdots+(a+b-1)}_{b}+\underbrace{(a+b+1)+(a+b+2)+\cdots+(a+b+c)}_{c}
\end{align*}
reveals that the $i$ odd-length sequences in $\beta^{(i,j)}$ are one having length $2j+1$ followed by $i-1$ singletons (i.e., sequences of length $1$). Namely, we have
\begin{align}\label{base for main thm}
\beta^{(i,j)}:=1+2+\cdots+(2j-1)+2j+(2j+1)+(2j+3)+\cdots+(2j+2i-1).
\end{align}
Note that $|\beta^{(i,j)}|=i^2+2ij+2j^2+j$, therefore as a strict partition itself, the base partition $\beta^{(i,j)}$ contributes to the generating function $D^{\sol,\ell}(x,y;q)$ precisely $x^{i}y^{i+2j}q^{i^2+2ij+2j^2+j}$, which matches the numerator of the summand in \eqref{gf:l-sol}.

Observing the denomenator $1/(q;q)_i(q^2;q^2)_j$, we expect the increments to come from two partitions. Let $\E$ denote the set of partitions into only even parts and similarly define the subset $\E_n:=\set{\la\in\E: \ell(\la)\le n}$. The discussion above indicates that Theorem~\ref{thm:main} is equivalent to the following lemma.

\begin{lemma}\label{lem:main bij}
For $i,j\ge 0$, there exists a bijection
\begin{align*}
\varphi=\varphi_{i,j}: \set{\beta^{(i,j)}}\times\P_i\times\E_j &\to \D_{i,j}\\
(\beta^{(i,j)},\mu,\eta) &\mapsto \la,
\end{align*}
such that $|\la|=|\beta^{(i,j)}|+|\mu|+|\eta|$, $\ell(\la)=\ell(\beta^{(i,j)})$, and $\sol(\la)=\sol(\beta^{(i,j)})$.
\end{lemma}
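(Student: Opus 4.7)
The plan is to exploit the classical staircase decomposition of strict partitions to recast Lemma~\ref{lem:main bij} as a clean combinatorial statement about multiplicity patterns. Any strict partition $\la$ of length $\ell$ admits a unique decomposition
\[
\la = (1+2+\cdots+\ell) + \delta,
\]
where $\delta_k := \la_k - k$ forms a weakly increasing sequence $0 \le \delta_1 \le \cdots \le \delta_\ell$; that is, a partition with at most $\ell$ parts (padded with leading zeros as needed). Because $\la_{k+1} - \la_k = 1 + (\delta_{k+1} - \delta_k)$, two consecutive parts of $\la$ lie in the same sequence if and only if $\delta_{k+1} = \delta_k$. Hence the sequences of $\la$ are in bijection with the distinct values occurring in $\delta$ (including $0$), and the length of each sequence equals the multiplicity of the corresponding value. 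In particular, $\la \in \D_{i,j}$ if and only if $\delta$ has $i + 2j$ parts and exactly $i$ of its distinct values have odd multiplicity.

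Given this dictionary, I would construct $\varphi$ by manufacturing $\delta$ directly from $(\mu, \eta)$. Writing $\mu \in \P_i$ as $0 \le \mu_1 \le \cdots \le \mu_i$ and $\eta \in \E_j$ as $\eta = 2\tilde\eta$ for $\tilde\eta \in \P_j$ with parts $0 \le \tilde\eta_1 \le \cdots \le \tilde\eta_j$, set $v_k := \mu_k + (k-1)$ for $1 \le k \le i$. The $v_k$'s form a strictly increasing sequence of non-negative integers, which I designate as the odd-multiplicity values of $\delta$. For each $v \ge 0$, declare the multiplicity of $v$ in $\delta$ to be $2\,|\{r : \tilde\eta_r = v\}|$, augmented by $1$ precisely when $v \in \{v_1,\ldots,v_i\}$. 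Then $\delta$ has $i + 2j$ parts with exactly $i$ odd-multiplicity values, so $\la := (1+2+\cdots+(i+2j)) + \delta \in \D_{i,j}$. Tracking the weight yields
\[
|\la| = \binom{i+2j+1}{2} + \sum_{k=1}^i v_k + 2\sum_{r=1}^j \tilde\eta_r = \binom{i+2j+1}{2} + \binom{i}{2} + |\mu| + |\eta|,
\]
and the routine identity $\binom{i+2j+1}{2} + \binom{i}{2} = i^2 + 2ij + 2j^2 + j = |\beta^{(i,j)}|$ gives $|\la| = |\beta^{(i,j)}| + |\mu| + |\eta|$, as required.

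The inverse is transparent: from $\la \in \D_{i,j}$, compute $\delta := \la - (1+2+\cdots+(i+2j))$, read off the $i$ distinct odd-multiplicity values $v_1 < \cdots < v_i$ to recover $\mu_k := v_k - (k-1) \ge 0$, and collect the remaining pair values of $\delta$ (each $v$ contributing $\lfloor n_v/2\rfloor$ copies, for a total of $j$ copies) in weakly increasing order as $\tilde\eta_1 \le \cdots \le \tilde\eta_j$, yielding $\eta := 2\tilde\eta \in \E_j$. I do not foresee any substantive obstacle beyond careful bookkeeping: the entire argument hinges on the dictionary in the first paragraph, which is essentially self-evident once noticed, after which the bijection is forced, since encoding a strict $i$-subset of $\mathbb{Z}_{\ge 0}$ via the shift $v_k \mapsto v_k - (k-1)$ is exactly the standard correspondence with $\P_i$, while the residual even-multiplicity data corresponds exactly to $\eta \in \E_j$ through its doubled pair-count. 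The only delicate point is allowing $v_1 = 0$ and $\tilde\eta_1 = 0$, which handles the case where $\delta$ begins with leading zeros, i.e., where $\la$ contains a bottommost sequence starting at the part $1$.
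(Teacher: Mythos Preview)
Your argument is correct and takes a genuinely different route from the paper's. The paper constructs $\varphi$ algorithmically: starting from $\beta^{(i,j)}$, it performs a sequence of forward moves on the $i$ singletons (Phase~I, consuming $\mu$) and then on the $j$ pairs (Phase~II, consuming $\eta$), with explicit ``adjustment'' rules to resolve collisions when a moving pair overtakes a singleton; the inverse is described symmetrically via backward moves and ``normalizations''. You bypass this iterative machinery entirely by passing to the staircase difference $\delta_k=\la_k-k$, under which the sequence structure of $\la$ becomes the multiplicity profile of $\delta$, so that o-sequences correspond exactly to odd-multiplicity values. Your bijection is then a one-line encoding: the $i$ odd-multiplicity values, shifted down by $0,1,\ldots,i-1$, give $\mu$, and the floor-halved multiplicities give $\tilde\eta=\eta/2$. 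This is shorter and conceptually cleaner; on the paper's worked example ($i=j=2$, $\mu=1+4$, $\eta=4+4$) your map in fact produces the same $\la=2+4+5+6+7+11$. The paper's move-based presentation, on the other hand, is the template it later adapts---with longer blocks and modified adjustment rules---to handle the $(k,a)$-strict partitions of Section~\ref{sec:Andrews-Uncu} and the variant sets $\C_{i,j}$ and $\W_{i,j}$; your staircase dictionary would extend to these as well, but the paper's framework makes those extensions more visibly mechanical.
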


Before giving the proof, we take a closer look at strict partitions with the statistics $\sol$ and $\ell$ in mind, lay some groundwork and motivate our construction of $\varphi$. ``Sequence of odd length'' will be abbreviated as ``o-sequence'' in what follows. For any given strict partition $\la$, we call the largest part in each of $\la$'s o-sequence a {\it singleton}. All non-singleton parts can be grouped into {\it pairs} of consecutive parts. For simpler notation, we shall replace all plus signs with commas and place square brackets around each pair. So for instance the base partition can now be written as
\begin{align*}
\beta^{(i,j)} &= [1,2],[3,4],\ldots,[(2j-1),2j],(2j+1),(2j+3),\ldots,(2j+2i-1).
\end{align*}

We append copies of zeros to $\mu$ (resp.~$\eta$) if needed, so that $\mu$ (resp.~$\eta$) has exactly $i$ (resp.~$j$) parts. Now, going from the triple $(\beta^{(i,j)},\mu,\eta)$ to $\la$, the idea of $\varphi$ is to incorporate parts of $\mu$ and $\eta$ into the base partition $\beta^{(i,j)}$, one part at a time, by applying forward moves (to be given in the proof) to singletons and pairs, to get a string of strict partitions
$$\la^{(0)}:=\beta^{(i,j)},\la^{(1)},\ldots,\la^{(i)},\la^{(i+1)},\ldots,\la^{(i+j)},$$
so that for each intermediate partition, all singletons stay as singletons, and the two parts in the same pair remain consecutive. Consequently, we have that for $1\le k\le i+j$,
\begin{align}\label{weight-preserv}
\ell(\la^{(k)})=\ell(\beta^{(i,j)}),\; \sol(\la^{(k)})=\sol(\beta^{(i,j)}).
\end{align}

\begin{proof}[Proof of Lemma~\ref{lem:main bij}]
We are going to need forward and backward moves for both the singleton and the pair, so in total four kinds of operations are summarized in Table~\ref{tab:4moves} below.

\begin{table}[h!]
\centering
\begin{tabular}{|c|c|c|}
\hline
before & move & after\\
\hline
$a$ & forward & $a+1$ \\
$a$ & backward & $a-1$ \\
$[b,b+1]$ & forward & $[b+1,b+2]$ \\
$[b,b+1]$ & backward & $[b-1,b]$ \\
\hline
\end{tabular}
\medskip
\caption{Four kinds of moves for Lemma~\ref{lem:main bij}.}\label{tab:4moves}
\end{table}

We break the description of $\varphi$ into two phases.
\begin{description}
  \item[Phase I] We use parts from $\mu$ to increase the singletons of $\beta^{(i,j)}$. Denote $s_k:=2j+2k-1$ for $1\le k\le i$. Firstly, we apply $\mu_i$ forward moves to the singleton $s_i$ to arrive at a new partition $\la^{(1)}$, whose largest part is $s_i'=s_i+\mu_i$. Note that $s_i'$ remains a singleton of $\la^{(1)}$. Next, apply $\mu_{i-1}$ forward moves to $s_{i-1}$, turning $\la^{(1)}$ into $\la^{(2)}$ with a new part $s_{i-1}'=s_{i-1}+\mu_{i-1}$. Note that since $\mu_{i-1}\le\mu_i$, we have $s_{i-1}'\le s_i'-2$, so again $s_{i-1}'$ remains a singleton in $\la^{(2)}$. So on and so forth, until we reach the final step of this phase. Namely, we apply $\mu_1$ forward moves to $s_1$, turning $\la^{(i-1)}$ into $\la^{(i)}$, whose smallest singleton is $s_1'=s_1+\mu_1$. Each step can be analogously justified to satisfy condition \eqref{weight-preserv}.

  \item[Phase II] We use parts from $\eta$ to increase the pairs of $\beta^{(i,j)}$, making adjustments along the way if necessary. Note that each forward move on a pair consumes a weight of $2$, which explains why we require $\eta\in\E$. Denote $p_k:=[2k-1,2k]$ for $1\le k\le j$. For $k=j,j-1,\ldots,1$, we apply $\eta_k/2$ forward moves to the pair $p_k$, turning $\la^{(i+j-k)}$ into $\la^{(i+j-k+1)}$. The weight increment is seen to be $|\la^{(i+j-k+1)}|-|\la^{(i+j-k)}|=2\cdot\frac{\eta_k}{2}=\eta_k$. A crucial thing to notice when we compare these moves with those in phase I is that $\eta$ is independent from $\mu$, so it may well be the case that certain pair overtake one or more existing singletons when we perform its forward moves (there are no ``pair overtaking pair'' since $\mu_k\ge \mu_{k-1}$). Such a ``collision'' between pair and singleton causes us trouble, and we fix it with the following ``adjustment''. Here and elsewhere, we indicate the pair or singleton being moved in boldface.
  $$
  \begin{gathered}
  (\text{parts $\leq t-1$}),[ \mathbf{t}, \mathbf{t+1}], t+2, (\text{parts $\geq t+4$}) \\
  \qquad \downarrow \text {one forward move } \\
  (\text{parts $\leq t-1$}),[\mathbf{t+1}, \mathbf{t+2}], t+2, (\text{parts $\geq t+4$}).
  \end{gathered}
  $$
  Clearly the repeated $t+2$ prevents this partition from being strict. We need to make an adjustment as follows.
  $$
  \begin{gathered}
   (\text{parts $\leq t-1$}),[\mathbf{t+1}, \mathbf{t+2}], t+2, (\text{parts $\geq t+4$}) \\
  \qquad \downarrow \text {an adjustment } \\
  (\text{parts $\leq t-1$}),t, [\mathbf{t+2}, \mathbf{t+3}], (\text{parts $\geq t+4$}).
  \end{gathered}
  $$
  After this adjustment there are no repeated parts and the singleton $t+2$ becomes the singleton $t$, since $t+1$ is not a part anymore. Also note that an adjustment does not consume any weights. We need it for keeping the process well-defined. The final partition $\la^{(i+j)}$ is taken to be the image $\varphi(\beta^{(i,j)},\mu,\eta)=\la$. Since we have exhausted the parts of $\mu$ and $\eta$ in those forward moves of Phase I and Phase II respectively, naturally we have the weight match $|\la|=|\beta^{(i,j)}|+|\mu|+|\eta|$, while the other two statistics are preserved thanks to \eqref{weight-preserv}.
\end{description}

Next, we describe the inverse map $\varphi^{-1}$, from any strict partition $\la\in\D_{i,j}$ back to the triple $(\beta^{(i,j)},\mu,\eta)$. Since $\la\in\D_{i,j}$, it must contain $i$ singletons and $j$ pairs. We label them from the smallest to the largest as $p_1,p_2,\ldots,p_j$ (for the pairs) and $s_1,s_2,\ldots,s_i$ (for the singletons). The process to recover the triple from $\la^{(i+j)}:=\la$ can be divided into two phases.
\begin{description}
  \item[Phase II'] We perform backward moves on the pairs so as to recover the base partition, and the weight decrements will be collected as parts of $\eta$. We start with $p_1$. Since it is the smallest pair, if there are any parts smaller than it, they must all be singletons. Suppose $p_1=[t,t+1]$ with $r$ singletons preceding it, then we need $t-1-r$ backward moves on $p_1$ to recover its corresponding pair in $\beta^{(i,j)}$, namely $[1,2]$. The weight decrement is recorded as $\eta_1=2(t-1-r)$, the smallest part of $\eta$, and the new partition we get is denoted as $\la^{(i+j-1)}$. Each time $p_1$ passes by a singleton, a {\it normalization} takes place and ``saves'' us one move, explaining the ``$-r$'' in our counting of moves. In effect, each normalization undoes an adjustment that took place in Phase II of the forward mapping $\varphi$. We illustrate with the following example.
  $$
  \begin{gathered}
  (\text{parts $\leq t-3$}), t-2, [ \mathbf{t}, \mathbf{t+1}], (\text{parts $\geq t+2$}) \\
  \qquad \downarrow \text {one backward move } \\
  (\text{parts $\leq t-3$}), t-2, [\mathbf{t-1}, \mathbf{t}], (\text{parts $\geq t+2$})\\
  \qquad \downarrow \text {a normalization } \\
  (\text{parts $\leq t-3$}), [\mathbf{t-2}, \mathbf{t-1}], t, (\text{parts $\geq t+2$}).
  \end{gathered}
  $$
  Note that the normalization creates no weight changes even though the pair $[t-1,t]$ does effectively advance to the left by $1$. In general for $1< k\le j$, suppose the pair $p_k=[t_k,t_k+1]$ and there are $r_k$ singletons inbetween $p_k$ and $[2k-3,2k-2]$, then we apply $t_k-(2k-1)-r_k$ backward moves on $p_k$, turning $\la^{(i+j-k+1)}$ into $\la^{(i+j-k)}$, taking $r_k$ normalizations along the way, and recording the weight derement as $\eta_k:=2(t_k-(2k-1)-r_k)$. When all pairs are back to their locations in the base partition $\beta^{(i,j)}$, we are done with this phase and obtain $\la^{(i)}$. It should be clear that $\eta:=\eta_1+\eta_2+\cdots+\eta_j$ is indeed a partition in $\E_j$.
  \item[Phase I'] Note that the singletons in $\la^{(i)}$ may be in different positions as they were in $\la$, due to potential normalizations when certain pair passes them by. Rename them as $s_1',\ldots,s_i'$. The idea is clearly to reverse Phase I. So for $k=1,2,\ldots,i$, we apply $s_k'-(2j+2k-1)$ backward moves on the singleton $s_k'$, and record the new partition as $\la^{(i-k)}$, the weight decrement as $\mu_k:=s_k'-(2j+2k-1)$. In the end, $\la^{(0)}=\beta^{(i,j)}$ has been recovered, $\mu:=\mu_1+\cdots+\mu_i\in\P_i$ and we are done.
\end{description}
Seeing that Phase II (resp.~Phase I) and Phase II' (resp.~Phase I') are inverse process of each other, we deduce that $\varphi$ is a bijection.
\end{proof}
An example of applying the bijection $\varphi$ and and its inverse $\varphi^{-1}$ is worth sharing here.
\begin{example}
Given $\beta=\beta^{(2,2)}=[1, 2], [3, 4], 5, 7$, $\mu=1+4$ and $\eta=4+4$. First for Phase I, we use parts $4$ and $1$ from the partition $\mu$ to forward move the singletons $7$ and $5$, respectively. Then we have 
\begin{align*}
\la^{(1)} &=[1, 2], [3, 4], 5, \mathbf{11};\\
\la^{(2)} &=[1, 2], [3, 4], \mathbf{6}, 11.
\end{align*}
Next for Phase II, we perform $\frac{1}{2}\eta_{2}=2$ forward moves on the largest pair $[3, 4]$, making one adjustment along the way.
$$
\begin{gathered}
\la^{(2)}=[1, 2], [3, 4], 6, 11\\
\qquad \downarrow \text {the first forward move on $[3, 4]$} \\
[1, 2], \mathbf{[4, 5]}, 6, 11\\
\qquad \downarrow \text {the second forward move on $[4, 5]$} \\
[1, 2], \mathbf{[5, 6]}, 6, 11\\
\qquad \downarrow \text {an adjustment} \\
[1, 2], 4, \mathbf{[6, 7]}, 11=\la^{(3)}.
\end{gathered}
$$
Then we perform $\frac{1}{2}\eta_{1}=2$ forward moves on the next pair $[1, 2]$, making one adjustment along the way.
$$
\begin{gathered}
\la^{(3)}=[1,2], 4, [6, 7], 11\\
\qquad \downarrow \text {the first forward move on $[1, 2]$}\\
\mathbf{[2, 3]}, 4, [6, 7], 11\\
\qquad \downarrow \text {the second forward move on $[2, 3]$}\\
\mathbf{[3, 4]}, 4, [6, 7], 11\\
\qquad \downarrow \text {an adjustment}\\
2, \mathbf{[4, 5]}, [6, 7], 11=\la^{(4)}=:\la.
\end{gathered}
$$
One verifies that $|\lambda|=35=|\beta|+|\mu|+|\eta|$, $\ell(\la)=\ell(\beta)=6$, and $\sol(\la)=\sol(\beta)=2$. Next we construct $(\beta, \mu, \eta)$ from the partition $\lambda$ via the inverse mapping $\varphi^{-1}$ as follows.
$$
\begin{gathered}
\la^{(4)}:=2, [4, 5], [6, 7], 11\\
\qquad \downarrow \text {one backward move on $[4, 5]$}\\
2, \mathbf{[3, 4]}, [6, 7], 11\\
\qquad \downarrow \text {a normalization}\\
\mathbf{[2, 3]}, 4, [6, 7], 11\\
\qquad \downarrow \text {one backward move on $[2, 3]$}\\
\mathbf{[1, 2]}, 4, [6, 7], 11=\la^{(3)}.
\end{gathered}
$$
This results in $\eta_{1}=2\cdot(4-1-1)=4$.
$$
\begin{gathered}
\la^{(3)}=[1, 2], 4, [6, 7], 11 \\
\qquad \downarrow \text {one backward move on $[6, 7]$}\\
[1, 2], 4, \mathbf{[5, 6]}, 11\\
\qquad \downarrow \text {a normalization}\\
[1, 2], \mathbf{[4, 5]}, 6, 11\\
\qquad \downarrow \text {one backward move on $[4, 5]$}\\
[1, 2], \mathbf{[3, 4]}, 6, 11=\la^{(2)},
\end{gathered}
$$
which means that $\eta_2=2\cdot(6-3-1)=4$. This completes Phase II' and we have $\eta=4+4$. Phase I' is seen to give us $\mu_{1}=6-5=1$, $\mu_{2}=11-7=4$, and $\la^{(0)}=\beta=[1, 2], [3, 4], 5, 7$. Together we arrive at the desired triple $(\beta,\mu,\eta)$.
\end{example}

\section{A combinatorial proof of Theorem~\ref{thm:three (1,2)}}\label{sec:3id}

The combinatorial framework and Theorem~\ref{thm:main} enable us to give alternative proofs of the three identities in Theorem~\ref{thm:three (1,2)}. We begin with the proofs of \eqref{id:(1,2)-2} and \eqref{id:(1,2)-3}.

Clearly, the right hand side of \eqref{id:(1,2)-2}, namely $(-uq;q)_{\infty}$, generates strict partitions with the power of $u$ keeping track of the number of parts, so it suffices to make a change of variables ($x\to 1,~y\to u$) to deduce \eqref{id:(1,2)-2} from \eqref{gf:l-sol}.

Along the same lines, by first setting $u\to uq^3$, then $q^2\to q$, we get the following equivalent form of \eqref{id:(1,2)-3}:
\begin{align}\label{id:(1,2)-3'}
\sum_{i, j\geq 0}\frac{u^{i+2j}q^{i^2+2ij+2j^2+3j}}{(q; q)_i(q^2; q^2)_j}=(1+uq+uq^2)(-uq^3; q)_{\infty}.
\end{align}
Now we can give a bijective proof of \eqref{id:(1,2)-3'} which is analogous to the proof of Theorem~\ref{thm:main}, with additional restrictions on the appearances of parts $1$ and $2$. Alternatively, we observe in retrospect that \eqref{id:(1,2)-3'} is actually equivalent to \eqref{id:(1,2)-2}. Namely, we first denote the left hand side and the right hand side of \eqref{id:(1,2)-2} by $L(u)$ and $R(u)$, respectively, then notice that the right hand side of \eqref{id:(1,2)-3'} can be rewritten as
 \begin{align*}
 (1+uq+uq^2)(-uq^3; q)_{\infty} &= (-uq;q)_{\infty}-u^2q^3(-uq^3;q)_{\infty}=R(u)-u^2q^3R(uq^2).
 \end{align*}
 Simple calculation verifies that the left hand side of \eqref{id:(1,2)-3'}
equals $L(u)-u^2q^3L(uq^2)$. This means that \eqref{id:(1,2)-2} implies \eqref{id:(1,2)-3'}. Conversely, turning the difference equation
\begin{align*}
X(u)-u^2q^3X(uq^2)=(1+uq+uq^2)(-uq^3;q)_{\infty}
\end{align*}
with initial condition $X(0)=1$ into a homogeneous difference equation, we can apply the uniqueness of solution (see~\cite[Lemma~1]{and68}) to deduce \eqref{id:(1,2)-2} from \eqref{id:(1,2)-3'}. So these two identities are indeed equivalent.



Next, we proceed to the proof of Eq.~\eqref{id:(1,2)-1}. The factor $(-1)^j$ appearing in the summand of the left hand side of \eqref{id:(1,2)-1} indicates that we need to construct a ``killing involution'' that explains the massive cancellation from this side, with the fixed points being generated by the right hand side of \eqref{id:(1,2)-1}. For $n\in \N$, let $\O\D(n)$ be the set of strict partitions of $n$ with odd parts only and denote $\O\D=\bigcup_{n\geq 0}\O\D(n)$. For $i, j\geq 0$, let $\C_{i, j}$ be the set of partitions into $i+2j$ parts such that 
\begin{enumerate}[(1)]
    \item the number of occurrences of each part is at most $2$,
    \item the number of repeated parts is $j$, and
    \item the difference between two adjacent distinct parts is at least $2$. 
\end{enumerate}
Denote $\C=\bigcup_{i, j\geq 0}\C_{i, j}$ and $\C(n)=\{\la\in \C: \la\vdash n\}$. Notice that $\O\D\subset\C$. Also, it is not hard to see that
\begin{align}\label{gf:OD}
\sum_{\la\in\O\D}u^{\ell(\la)}q^{\abs{\la}}=(-uq;q^2)_\infty,
\end{align}
which is the right hand side of \eqref{id:(1,2)-1}. Meanwhile, we have the following interpretation concerning the left hand side of \eqref{id:(1,2)-1}.
\begin{lemma}
For $i,j\ge 0$, we have
\begin{align}\label{gf:C_ij}
\sum_{\la\in\C_{i,j}}v^{\ell_r(\la)}u^{\ell_d(\la)}q^{\abs{\la}} &=\frac{v^{j}u^{i+j}q^{i^2+2ij+2j^2}}{(q;q)_{i}(q^2;q^2)_{j}}.
\end{align}
\end{lemma}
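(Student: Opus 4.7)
My plan is to extend the base-plus-increments framework of Section~\ref{sec:bij} to the set $\C_{i,j}$. The first observation is that conditions~(1) and~(2) force $\ell_r(\la)=j$ and hence $\ell_d(\la)=\ell(\la)-\ell_r(\la)=i+j$ for every $\la\in\C_{i,j}$; consequently the factor $v^{j}u^{i+j}$ on the right of \eqref{gf:C_ij} is constant on $\C_{i,j}$ and may be pulled out. It thus suffices to prove the pure weight identity
\begin{equation*}
\sum_{\la\in\C_{i,j}}q^{|\la|}=\frac{q^{i^2+2ij+2j^2}}{(q;q)_i(q^2;q^2)_j}.
\end{equation*}

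Next I would identify the base partition. Subject to the gap-$2$ condition on adjacent distinct parts, the minimum sum of $i+j$ distinct positive integers is $1+3+\cdots+(2(i+j)-1)$, and to minimize the further contribution from the $j$ doublings one takes the $j$ smallest distinct parts as the repeated ones. This yields
\begin{equation*}
\beta^{(i,j)}:=1,1,3,3,\ldots,(2j-1),(2j-1),(2j+1),(2j+3),\ldots,(2j+2i-1),
\end{equation*}
whose weight $2(1+3+\cdots+(2j-1))+((2j+1)+\cdots+(2j+2i-1))=i^2+2ij+2j^2$ matches the numerator.

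I would then construct a weight-preserving bijection $\psi:\{\beta^{(i,j)}\}\times\P_{i}\times\E_{j}\to\C_{i,j}$ closely modeled on $\varphi$ of Lemma~\ref{lem:main bij}. In Phase~I the parts $\mu_{1}\le\cdots\le\mu_{i}$ of $\mu\in\P_{i}$ are used to forward-shift the $i$ singletons of $\beta^{(i,j)}$, processed from largest to smallest (each forward move on a singleton costs weight~$1$); monotonicity of $\mu$ keeps the shifted singletons pairwise gap-$2$ apart. In Phase~II the even parts $\eta_{1}\le\cdots\le\eta_{j}$ of $\eta\in\E_{j}$ are used to forward-shift the $j$ repeated pairs $[2k-1,2k-1]$ from largest to smallest; a forward move on such a pair advances both copies by $1$ (costing weight~$2$), so the $k$-th pair receives $\eta_{k}/2$ moves. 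Monotonicity of $\eta$ again rules out pair-pair collisions.

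The main obstacle will be handling collisions between a moving pair and a singleton lying directly above it: a forward move producing $\ldots,[a+1,a+1],a+2,\ldots$ violates gap-$2$, and I would resolve this by the weight-preserving local swap
\begin{equation*}
\ldots,[a+1,a+1],a+2,\ldots \;\longmapsto\; \ldots,a,[a+2,a+2],\ldots,
\end{equation*}
which one checks preserves the total weight, the number of pairs, the number of singletons, and the gap-$2$ condition with all neighboring parts (in particular this reasoning also handles the iterated case when a pair traverses several singletons in succession). The inverse $\psi^{-1}$ then follows the template of $\varphi^{-1}$: Phase~II' applies backward moves to the pairs of $\la$ (from smallest to largest), with a normalization (the swap above run in reverse) each time a pair retreats past a singleton, the total weight decrement for the $k$-th pair being recorded as $\eta_{k}$; Phase~I' then backward-shifts the resulting singletons to recover $\beta^{(i,j)}$ and reads off $\mu$. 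Verifying that the adjustments and normalizations are mutually inverse, and that the running configuration stays in $\C_{i,j}$ outside the immediately-corrected single-step violations, will be the technical crux.
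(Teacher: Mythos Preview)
Your proposal is correct and follows essentially the same approach as the paper: the same base partition $\beta^{(i,j)}=[1,1],[3,3],\ldots,[(2j-1),(2j-1)],(2j+1),\ldots,(2j+2i-1)$, the same two-phase bijection with $\P_i\times\E_j$, and your weight-preserving swap $[a+1,a+1],a+2\mapsto a,[a+2,a+2]$ is exactly the paper's adjustment (take $t=a+1$). The only cosmetic issue is that the paper reserves the symbol $\psi$ for the Franklin-type involution of Theorem~\ref{thm:Franklin invo}, so you may want to rename your bijection.
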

\begin{proof}
The proof relies on a b$+$i-decomposition and for the most part parallels the proof of Lemma~\ref{lem:main bij}, so we will point out the main distinctions and be brief on other details. First, all partitions in $\C_{i,j}$ have $j$ repeated parts and $i+j$ different part sizes, so they are weighted uniformly by $v^ju^{i+j}$ in the summation. The next thing to realize is that the base partition has now become
\begin{align*}
\beta^{(i, j)}:=[1, 1], [3, 3], ..., [(2j-1), (2j-1)], (2j+1), (2j+3), ..., (2j+2i-1),
\end{align*}
which could be readily verified to have the correct size $|\beta^{(i, j)}|=i^2+2ij+2j^2$. With the same partitions $\mu\in\P_i$ and $\eta\in\E_j$ producing the increments, we can analogously define forward and backward moves, as well as adjustment and normalization, so as to construct a weight-preserving bijection between $\set{\beta^{(i,j)}}\times\P_i\times\E_j$ and $\C_{i,j}$. An example with one forward move and an adjustment is as follows.
  $$
  \begin{gathered}
  (\text{parts $\leq t-3$}), [ \mathbf{t-1}, \mathbf{t-1}], t+1, (\text{parts $\geq t+3$}) \\
  \qquad \downarrow \text {one forward move } \\
  (\text{parts $\leq t-3$}), [\mathbf{t}, \mathbf{t}], t+1, (\text{parts $\geq t+3$})\\
  \qquad \downarrow \text {an adjustment} \\
  (\text{parts $\leq t-3$}), t-1, [\mathbf{t+1}, \mathbf{t+1}], (\text{parts $\geq t+3$}).
  \end{gathered}
  $$
\end{proof}
Plugging in $v=-1$ in \eqref{gf:C_ij} recovers the summand of the left hand side of \eqref{id:(1,2)-1} and interprets it as a signed and weighted counting of partitions in $\C$. Combining this new insight with \eqref{gf:OD}, we see that Eq.~\eqref{id:(1,2)-1} is equivalent to the following partition theorem.

\begin{theorem}\label{thm:Lovejoy}
For any given $n\ge m\ge 0$, let $\O\D(m,n)$ (resp.~$\C(m,n)$) be the set of partitions in $\O\D(n)$ (resp.~$\C(n)$) with $m$ parts (resp.~$m$ distinct parts), then we have
\begin{align*}
|\set{\la\in\C(m,n): \ell_r(\la) \text{ is even}}|-|\set{\la\in\C(m,n): \ell_r(\la) \text{ is odd}}|=|\O\D(m,n)|.
\end{align*}
\end{theorem}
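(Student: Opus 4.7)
The plan is to establish Theorem~\ref{thm:Lovejoy} via a weight-preserving, $\ell_d$-preserving, sign-reversing involution $\Phi$ on $\C(m,n)\setminus\O\D(m,n)$ that flips the parity of $\ell_r$. Observe first that $\O\D(m,n)\subseteq\C(m,n)$ as the set of partitions with $\ell_r=0$ and every part odd; these will be the fixed points of $\Phi$, and the theorem follows at once from its existence.

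Represent any $\la\in\C$ by its list of distinct parts $b_1<b_2<\cdots<b_m$ (with $b_{i+1}-b_i\geq 2$) together with multiplicity markers $\epsilon_i\in\{0,1\}$, where $\epsilon_i=1$ signals the pair $[b_i,b_i]$. A partition is outside $\O\D$ exactly when some $\epsilon_i=1$ or some $b_i$ is even, and for such $\la$ let $k=k(\la)$ be the smallest such index; by minimality, $b_1,\ldots,b_{k-1}$ are all odd singletons.

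The basic toggle is local: when $\epsilon_k=1$, the pair $[b_k,b_k]$ is swapped for the singleton $2b_k$; when $\epsilon_k=0$ and $b_k$ is even, the singleton $b_k$ is swapped for the pair $[b_k/2,b_k/2]$. Each such move preserves $|\la|$ and $\ell_d$ while flipping $\ell_r$ by one, and when the result still lies in $\C$, this defines $\Phi(\la)$. Otherwise a clash has occurred -- the new part would be within $1$ of $b_{k-1}$, or would coincide with it to create a triple -- and $\Phi$ is defined by a coupled move that simultaneously modifies the odd singleton $b_{k-1}$ together with the toggled part, redistributing weight across positions $k-1$ and $k$ while preserving $\ell_d$. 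A case split on the clash type (controlled by the position of $b_k$ relative to $2b_{k-1}$ and by the parity of $b_{k-1}$) shows that exactly one such coupled move is available in each clash, and the resulting partition still lies in $\C$. In the cases that were not resolvable by a single coupling, one iterates along the initial chain $b_1,\ldots,b_{k-1}$ of odd singletons until a legal configuration is reached; the finite length of this chain guarantees termination.

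The main obstacle will be verifying that $\Phi$ is genuinely an involution. For the naive toggles this is immediate, but for the coupled moves one must confirm that the scanning rule applied to $\Phi(\la)$ identifies the same (or an appropriately shifted) bad index and that the prescribed coupled formula returns $\la$, and, more subtly, that clash images of one source never collide with naive images of another. This reduces to a finite but delicate case check parameterized by $b_k-b_{k-1}$, the parity of $b_{k-1}$, and the length of the initial odd chain; organizing these cases so that pre-clash and post-clash configurations are matched as mutual partners, with no double-counting, is where the bulk of the work lies. Once this bookkeeping is completed, the resulting $\Phi$ is a sign-reversing involution with fixed-point set $\O\D(m,n)$, and Theorem~\ref{thm:Lovejoy} follows.
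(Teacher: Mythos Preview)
Your proposal is a plan rather than a proof: the coupled moves are never written down, and you explicitly defer ``the bulk of the work'' to an unspecified case analysis. More importantly, the plan as stated cannot succeed, because the local toggle at the \emph{smallest} bad index is not an involution even when no clash (in your sense) occurs.

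Take $\la=[5,5],8\in\C$. Here $b_1=5$ with $\epsilon_1=1$, so $k=1$ and you merge $[5,5]\mapsto 10$. The resulting partition is $8,10$, which lies in $\C$ (gap $=2$), and there is no $b_{k-1}$ to clash with, so your rule sets $\Phi(\la)=8+10$. Now apply $\Phi$ again: in $8,10$ the smallest bad index is $1$ (since $b_1=8$ is even), you split $8\mapsto[4,4]$, and obtain $[4,4],10\in\C$, again with no clash. Thus $\Phi(\Phi(\la))=[4,4]+10\neq\la$. The problem is structural: merging $[b_k,b_k]$ produces $2b_k$, which generically lands \emph{above} several later parts $b_{k+1},b_{k+2},\ldots$, some of which may themselves be even singletons with smaller index than the new $2b_k$; your scanning rule then picks one of those instead on the return trip. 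Your notion of ``clash'' only watches $b_{k-1}$ and so misses this entirely.

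The paper's involution $\psi$ avoids this by working at the \emph{top} rather than the bottom: it compares the \emph{largest} repeated part $a=\lrp(\la)$ with the smallest element $b=\sle(\la)$ of the \emph{largest} even run, and before merging it first pushes $[a,a]$ forward through the intervening singletons (shifting each down by $2$) to a position $\hat a$ where $2\hat a$ can be inserted as the new leading element of that last even run. This global repositioning is exactly what guarantees that the reverse scan recovers the same pair, and it is what your local, bottom-up scheme lacks.
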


\begin{remark}
It should be pointed out that in \cite[Thm.~1.9]{lov06}, Lovejoy gave essentially the same partition theorem as Theorem~\ref{thm:Lovejoy}, but his way of proving it was to apply the ``constant term method'', while our approach below is purely combinatorial, reminiscent of Franklin's famed involutive proof of Euler's pentagonal number theorem (cf.~\cite[Thm.~1.6]{andtp}).
\end{remark}



\begin{theorem}\label{thm:Franklin invo}
There exists an involution
\begin{align*}
\psi: \C &\to \C\\
\la &\mapsto \gam,
\end{align*}
such that $\gamma=\la$ if and only if $\la\in\O\D$, while for $\la\in\C\setminus \O\D$, we have $|\la|=|\gam|$, $\ell_d(\la)=\ell_d(\gamma)$, and $\ell_r(\la)\not\equiv\ell_r(\gamma)\pmod 2$. Consequently, Theorem~\ref{thm:Lovejoy} holds true.
\end{theorem}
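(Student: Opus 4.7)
The plan is to build a Franklin-type involution $\psi$ on $\C$ that swaps, at a canonical location of $\la$, a doubled pair $[v,v]$ for an even singleton $2v$. Both configurations contribute weight $2v$ and exactly one distinct value to $\la$, so such a swap preserves $|\la|$ and $\ell_d(\la)$ while flipping $\ell_r(\la)$ by $\pm 1$. Moreover, $\la$ admits no such swap precisely when it has no even part and no repeated part, which characterises $\O\D$; this forces the fixed-point set of $\psi$ to coincide with $\O\D$, as required.

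To pin down the canonical location I would scan $\la$ from its smallest values upwards and define the \emph{defect value} $v^\ast$ as the smallest $v\ge 1$ such that either $v$ is doubled in $\la$ or $2v$ appears as a singleton of $\la$. For $\la\in\C\setminus\O\D$ such a $v$ exists, so $v^\ast$ is well defined; the basic move is to merge $[v^\ast,v^\ast]\to 2v^\ast$ or to split $2v^\ast\to[v^\ast,v^\ast]$, whichever side applies in $\la$.

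The main obstacle is that the naive swap can violate the gap-at-least-$2$ condition of $\C$. If, for instance, the smallest pair $[v,v]$ has next larger value $w\in\{2v,2v+1\}$, or if the even singleton $2v$ sits adjacent to $v\pm 1$, then $\psi$ must combine the defect with one or two neighbouring parts. I would introduce fallback moves such as the \emph{shift merge} $([v,v],2v+1)\mapsto(v+1,2v+2)$ and the \emph{double shift} $([v,v],2v)\mapsto([v-1,v-1],[v+1,v+1])$, together with their inverses, each preserving $|\la|$ and $\ell_d$ while flipping the parity of $\ell_r$. A careful case analysis on the run of near-consecutive parts immediately above $v^\ast$, with iterated sliding when that run is long, should exhaust every $\la\in\C\setminus\O\D$.

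Verifying involutivity is the principal technical difficulty: one must check that the defect value is unchanged by the action of $\psi$, so the inverse move is triggered at the same location in the image, and that each fallback rule is genuinely inverted by its counterpart. A cleaner bookkeeping should be available via the $b+i$-framework of Section~\ref{sec:bij}, where the lemma parameterises $\C_{i,j}$ by pairs $(\mu,\eta)\in\P_i\times\E_j$; there $\psi$ becomes a shuttle between $\C_{i+1,j-1}$ and $\C_{i-1,j+1}$ governed by a comparison such as the smallest odd part of $\mu$ against $\eta_1+1$, which dictates whether $j$ should decrease or increase at $\la$. Once $\psi$ is established, Theorem~\ref{thm:Lovejoy} follows at once by signed cancellation, since every $\la\in\C(m,n)\setminus\O\D(m,n)$ contributes in a pair of opposite $\ell_r$-parity, leaving exactly $|\O\D(m,n)|$.
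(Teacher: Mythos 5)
Your overall strategy (trade a repeated pair $[v,v]$ for an even singleton $2v$ at a canonical location) is the right kind of idea, but there is a genuine gap: the canonical location you choose is not stable under the map, so the involutivity check that you yourself flag as the principal difficulty actually fails for the rules you write down. Concretely, take $\la=[2,2],4\in\C$. Your defect value is $v^{\ast}=2$ (note it is already ambiguous, since $2$ is doubled \emph{and} $4=2\cdot 2$ is an even singleton), the naive merge $[2,2]\to 4$ would create $[4,4]$ and change $\ell_d$, so your ``double shift'' applies and sends $\la$ to $\gamma=[1,1],[3,3]$. But $\gamma$ has defect value $v^{\ast}=1$, so applying your $\psi$ to $\gamma$ triggers a move keyed at $1$, namely the merge $[1,1]\to 2$, which is illegal next to $[3,3]$; none of your stated fallbacks covers the configuration $([1,1],[3,3])$, and nothing returns $\gamma$ to $\la$. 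In general, anchoring at the \emph{smallest} defect makes the location drift under the map, and your list of repair moves (``iterated sliding when that run is long'') is open-ended rather than an exhaustive, mutually inverse case analysis. The closing suggestion to run the involution through the $(\mu,\eta)$ parameterisation is likewise only a guess at the right comparison, not a construction.

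The paper resolves exactly this difficulty by anchoring the involution at the \emph{large} end of the partition: it compares $a=\lrp(\la)$, the largest repeated part, with $b=\sle(\la)$, the smallest part of the largest even run. The pair $[a,a]$ is first transported rightward by a chain of weight-balancing forward moves (each compensated by backward moves on the singletons it passes) to a position $[\hat a,\hat a]$ where merging into $2\hat a$ is legal; if $2\hat a\ge b-2$ one merges, decreasing $\ell_r$ by one, and otherwise one transports $b$ leftward by the reverse kind of chain and splits it into a pair, increasing $\ell_r$ by one. The crux, which your sketch has no substitute for, is that the paper proves the dichotomy is reversed in the image and identifies $\lrp(\gamma)$ and $\sle(\gamma)$ explicitly, so the two cases are exactly inverse to one another. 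To salvage your version you would need an analogous transport mechanism together with a proof that your defect value and the merge/split dichotomy are preserved by $\psi$; the example above shows they are not for the rules as stated.
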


Before getting into the proof, we need to introduce several concepts for the convenience of our construction of the involution $\psi$. A maximal string of consecutive even singletons (non-repeated parts) contained in a partition $\la\in \C$ is called an {\it even run} of $\lambda$. For example,
$$
\lambda=[2, 2], 5, 8, [11, 11], 13, 16, 18, 20, [22,22], 30, 32, 35
$$
has three even runs, namely, $\{8\}, \{16 ,18, 20\}$ and $\{30, 32\}$. We define 
\begin{align*}
\lrp(\la) &:=\text{the largest repeated part in $\la$},\\
\sle(\la) &:=\text{the smallest part in the largest even run of $\la$}.
\end{align*}
So for our running example $\la$ above, $\lrp(\la)=22$ and $\sle(\la)=30$. We agree that if there is no repeated parts or no even parts in $\la$, then take $\lrp(\la)=-\infty$ or $\sle(\la)=-\infty$, respectively. Notice that if $\la$ has neither repeated parts nor odd parts, then it must be that $\la\in \O\D$.

\begin{proof}[Proof of Theorem \ref{thm:Franklin invo}]
Given a partition $\la\in\C$, if $\lrp(\la)=\sle(\la)=-\infty$, or equivalently $\la\in\O\D$, then we take it to be a fixed point of $\psi$. Otherwise, denote $a=\lrp(\la)$ and $b=\sle(\la)$. If $a=-\infty$, then simply take $\hat{a}:=a$ and go to Case II. In what follows we shall assume that $a>-\infty$. Let us take a closer look at the portion of $\la$ that begins with $[a,a]$:
$$
\la=(\text{parts}\leq a-2), [a, a], a_{1}, a_{2}, \cdots, a_{k}, (\text{parts}\geq a_{k}+2)\in\C,
$$  
where $k$ is the smallest integer such that $a_{k}\geq 2(a+k-1)+2$. If there is no such a part $a_{k}$ then we consider it to be $+\infty$ and take $a_{k-1}$ to be the last part of $\la$. We carry out $(k-1)$ forward moves on $[a, a]$, and one backward move on each of $a_1,a_2,\ldots,a_{k-1}$ to balance out the weight. As a result we get 
$$
\la'=(\text{parts}\leq a-2), (a_{1}-2), \cdots, (a_{k-1}-2), [(a+k-1), (a+k-1)], a_{k}, (\text{parts}\geq a_{k}+2).
$$
Let $\hat{a}=a+k-1$, then comparing the values of $2\hat{a}$ and $b=\sle(\la)$, we have the following two cases to consider.
\begin{description}

\item[Case I] If $2\hat{a}\geq b-2$, then we replace $[\hat{a}, \hat{a}]$ by $2\hat{a}$ in $\lambda'$ to get $\gam$. That is,
$$
\gam=(\text{parts}\leq a-2), (a_{1}-2), \cdots, (a_{k-1}-2), 2\hat{a}, a_{k}, (\text{parts}\geq a_{k}+2).
$$
Our choice of $k$ ensures that $a_{k-1}\leq 2(a+k-2)+1=2(a+k-1)-1=2\hat{a}-1$, then $a_{k-1}-2\leq 2\hat{a}-3$. On the other hand, $a_{k}\geq 2(a+k-1)+2=2\hat{a}+2$. Hence $\gam\in\C$ is well-defined. One checks that $\abs{\gam}=\abs{\la}$, $\ell_d(\gam)=\ell_d(\la)$, and $\ell_r(\gam)=\ell_r(\la)-1$. 

Moreover, we claim that $2\hat{a}=\sle(\gam)$. Indeed, for the case $2\hat{a}=b-2$ we see $a_{k}=b$, thus $2\hat{a}$ precedes and replaces $b$ to be the new smallest part in the last even run. While for the case $a_k-2\ge 2\hat{a}>b-2$, so $b\le a_{k-1}$, we know from previous discussion that $a_{k-1}-2\leq 2\hat{a}-3$. It means that in $\gam$, the new part $2\hat{a}$ is separated from the even run that originally was led by $b$, making $\{2\hat{a}\}$ effectively the new largest even run of $\gam$. In either case we have $2\hat{a}=\sle(\gam)$. In addition, make a note that $\gam$ now belongs to Case II. 

\item[Case II] If $2\hat{a}< b-2$, that is, $2\hat{a}\leq b-4$ since $b$ is an even number, then we write out further parts of $\la$ and $\la'$ that sit between $a_k$ and $b$ (if any), as follows.
\begin{align*}
\lambda=(\text{parts}\leq a-2), [a, a], a_{1}, a_{2}, \cdots, a_{k}, \cdots, a_{k+m}, b, (\text{parts}\geq b+2),\\
\lambda'=(\text{parts}\leq a-2), (a_{1}-2), \cdots, (a_{k-1}-2), [\hat{a}, \hat{a}], a_{k},\cdots, a_{k+m}, b, (\text{parts}\geq b+2).
\end{align*}
Make the following operations on $\la$. First compare $b$ with $a_{k+m}$, if $\frac{b}{2}-1\leq a_{k+m}$, then make one backward move on $b$ and one forward move on $a_{k+m}$. Next compare $b-2$ with $a_{k+m-1}$ (see if $\frac{b-2}{2}-1\le a_{k+m-1}$) and make moves as needed. So on and so forth until we encounter the smallest $t$ satisfying $a_{k+m-t}\leq\frac{b-2t}{2}-2$, and we denote this intermediate partition as
$$
\gam'=(\cdots), [a, a], a_{1}, \cdots, a_{k+m-t}, b-2t, (a_{k+m-t+1}+2), \cdots, (a_{k+m}+2), (\cdots).
$$
Now we replace part $b-2t$ with the pair $[\frac{b}{2}-t, \frac{b}{2}-t]$ to get $\gam$. If such a $t$ does exist, then we see that $\gam$ is well-defined since $a_{k+m-t}\leq \frac{b}{2}-t-2$ and $a_{k+m-t+1}\geq \frac{b}{2}-t$. One verifies that $\lrp(\gam'')=\frac{b}{2}-t$. On the other hand, if there exists no such $t$, that is, the chain of intermediate partitions stops at
\begin{align*}
\cdots, [a, a], b-2k-2m, a_{1}+2, \cdots,
\end{align*}
we make the split $b-2k-2m\to[\frac{b}{2}-k-m,\frac{b}{2}-k-m]$ to get $\gam$ and need to justify that $\gam$ is still a valid partition in $\C$. Indeed, since $b$ is the smallest part of largest even run in $\la$, we have
$a_{k}\le a_{k+m}-2m\le b-3-2m$. Also note that $a_{k}\geq 2\hat{a}+2=2a+2k$, so we know that $2a+2k\leq b-3-2m$, that is, $a\leq \frac{b}{2}-k-m-2$ since $b$ is even. In both cases, it is straightforward to check that $\abs{\gam}=\abs{\la}$, $\ell_d(\gam)=\ell_d(\la)$, and $\ell_r(\gam)=\ell_r(\la)+1$. It helps to also realize that $\lrp(\gam)=\frac{b}{2}-t$ (or $\lrp(\gam)=\frac{b}{2}-k-m$ in the case that $t$ does not exist) and $\gam$ belongs to Case I.
\end{description}



With everything that have been discussed above, we conclude that the map $\psi$ is indeed an involution that fixes $\O\D$, while for $\la\in\C\setminus\O\D$, its image $\gam=\psi(\la)$ satisfies
$$|\la|=|\gam|,~\ell_d(\la)=\ell_d(\gamma), \text{ and } \ell_r(\la)\not\equiv\ell_r(\gamma)\pmod 2.$$

\end{proof}

Some examples shall serve us well at this point. We offer one example with a step-by-step breaking down of applying the involution $\psi$ on a certain partition $\la$, and another example showing the complete correspondences for the case of $n=18$.

\begin{example}
Suppose that 
$$
\la=[2, 2], 4, 6, 9, [11, 11], 14, 16, 18.
$$
One sees that $b=\sle(\la)=14$ and $a=\lrp(\la)=11$. We need to make several forward moves on $[a,a]$ to get $\hat{a}$ for $\la$ as follows
\begin{align*}
\la=[2, 2], 4, 6, 9, [11, 11], 14, 16, 18 \\
\qquad \downarrow \text { a forward move since $11\leq 14\leq 23$ } \\
[2, 2], 4, 6, 9, 12, [12, 12], 16, 18\\
\qquad \downarrow \text { a forward move since $12\leq 16\leq 25$ } \\
[2, 2], 4, 6, 9, 12, 14, [13, 13], 18\\
\qquad \downarrow \text { a forward move since $13\leq 18\leq 27$ } \\
[2, 2], 4, 6, 9, 12, 14, 16, [14, 14]. 
\end{align*}
Then we know that $\hat{a}=14$. Since $2\hat{a}=28\geq b-2=12$, we see that $\la$ belongs to Case I so we get 
$$
\gam=[2, 2], 4, 6, 9, 12, 14, 16, 28.
$$
Conversely, it also reqires some forward moves to get $\hat{a}$ for $\gam$:
\begin{align*}
\gam=[2, 2], 4, 6, 9, 12, 14, 16, 28 \\
\qquad \downarrow \text { a forward move since $2\leq 4\leq 5$ } \\
2, [3, 3], 6, 9, 12, 14, 16, 28\\
\qquad \downarrow \text { a forward move since $3\leq 6\leq 7$ } \\
2, 4, [4, 4], 9, 12, 14, 16, 28\\
\qquad \downarrow \text { a forward move since $4\leq 9\leq 9$ } \\
2, 4, 7, [5, 5], 12, 14, 16, 28. 
\end{align*}
So we have that $b'=\sle(\gam)=28$ and $\hat{a'}=5$, then $2\hat{a'}=10\leq b-4=24$, meaning that $\gam$ belongs to Case II. We derive 

\begin{align*}
\gam=[2, 2], 4, 6, 9, 12, 14, 16, 28 \\
\qquad \downarrow \text { a forward move since $13\leq 16\leq 28$ } \\
[2, 2], 4, 6, 9, 12, 14, 26, 18\\
\qquad \downarrow \text { a forward move since $12\leq 14\leq 26$ } \\
[2, 2], 4, 6, 9, 12, 24, 16, 18\\
\qquad \downarrow \text { a forward move since $11\leq 12\leq 24$ } \\
[2, 2], 4, 6, 9, 22, 14, 16, 18.\\
\qquad \downarrow \text { no more moves since $9\le 22/2-2=9$ }
\end{align*}
Hence we get $\la=[2, 2], 4, 6, 9, [11, 11], 14, 16, 18
$ by splitting $22$ into $[11, 11]$.
\end{example}

\begin{example}
For $n=18$, one enumerates the fixed points to be $1+17,3+15,5+13, 7+11$, and $1+3+5+9$. The remaining partitions in $\C(18)$ are paired up according to our involution $\psi$ as follows.
\\

\begin{minipage}[c]{0.5\textwidth}
\centering
\begin{tabular}{r|l}
18 & [9+9]\\
2+16 & 2+[8+8]\\
4+14 & 4+[7+7]\\
6+12 & [5+5]+8\\
8+10 & [4+4]+10\\
1+3+14 & 1+3+[7+7]\\
1+4+13 & [1+1]+3+13\\
1+5+12 & 1+[5+5]+7\\
1+6+11 & 1+[3+3]+11\\
1+7+10 & 1+[4+4]+9\\
2+4+12 & 2+4+[6+6]\\
2+5+11 & [1+1]+5+11\\
2+6+10 & 2+[4+4]+8
\end{tabular}
\end{minipage}
\begin{minipage}[c]{0.5\textwidth}
\centering
\begin{tabular}{r|l}
2+7+9 & [1+1]+7+9\\
3+5+10 & [3+3]+5+7\\
3+6+9 & [2+2]+5+9\\
4+6+8 & [2+2]+6+8\\
1+3+6+8 & [1+1]+3+5+8\\
\text{[1+1]+[3+3]+10} & [1+1]+[3+3]+[5+5]\\
\text{[1+1]+[4+4]+8} & [1+1]+6+10\\
\ [2+2]+[4+4]+6 & [2+2]+4+10\\
\ [1+1]+[8+8] & [1+1]+16\\
\ [1+1]+4+[6+6] & [1+1]+4+12\\
\ [2+2]+[7+7] & [2+2]+14\\
\ [3+3]+[6+6] & [3+3]+12
\end{tabular}
\end{minipage}
\end{example}

As alluded to in \cite{CW23}, double sum identities like \eqref{id:(1,2)-1} could as well be established by summing over one of the index first and utilizing $q$-series manipulations. We supply such an analytic proof here for the sake of completeness. Recall the $q$-binomial theorem~\cite[p.~354, (II.3)]{GR04}
$$
\sum\limits_{n\geq 0}\frac{(a; q)_{n}}{(q; q)_{n}}u^{n}=\frac{(au; q)_{\infty}}{(u; q)_{\infty}},\ |u|<1,
$$
and two of its corollaries
\begin{equation}
\sum\limits_{n\geq 0}\frac{u^n}{(q; q)_{n}}=\frac{1}{(u; q)_{\infty}},\quad \sum\limits_{n\geq 0}\frac{q^{\binom{n}{2}}u^n}{(q; q)_{n}}=(-u; q)_{\infty},\ |u|<1.\label{eq:bino cor}
\end{equation}
We also need the $q$-Chu-Vandermonde identity in reverse order of summation~\cite[p.~354, (II.7)]{GR04}:
\begin{align}\label{id:qChuVan}
\sum\limits_{i=0}^{n}\frac{(a; q)_{i}(q^{-n}; q)_{i}}{(q; q)_{i}(c; q)_{i}}\left(\frac{cq^{n}}{a}\right)^{i}=\frac{(c/a; q)_{n}}{(c; q)_{n}}.
\end{align}
Let $c=-q, a\rightarrow +\infty$, we have that
$$
\begin{aligned}
\lim\limits_{a\rightarrow +\infty}(a; q)_{i}\left(\frac{cq^{n}}{a}\right)^{i}&=\lim\limits_{a\rightarrow +\infty}(\frac{1}{a}-1)(\frac{1}{a}-q)\cdots(\frac{1}{a}-q^{i-1})(-q^{n+1})^{i}\\
&=(-1)^{i}q^{\binom{i}{2}}(-q^{n+1})^{i}\\
&=q^{\binom{i+1}{2}+ni}.
\end{aligned}
$$
Putting this back to \eqref{id:qChuVan} we get
\begin{equation}
\sum\limits_{i=0}^{n}\frac{(q^{-n}; q)_{i}q^{\binom{i+1}{2}+ni}}{(q; q)_{i}(-q; q)_{i}}=\frac{1}{(-q; q)_{n}}.\label{eq:new1}
\end{equation}

\begin{proof}[Analytic proof of equation \eqref{id:(1,2)-1}]
We have that 
\begin{equation}
\begin{aligned}
\sum\limits_{i, j\geq 0}\frac{q^{i^2+2ij+2j^2}x^{i+2j}y^{j}}{(q; q)_{i}(q^2; q^2)_{j}}&=\sum\limits_{i, j\geq 0}\frac{q^{(i+j)^{2}}x^{i+j}}{(q; q)_{i+j}}\sum\limits_{j\geq 0}\frac{(q^{i+1}; q)_{j}q^{j^2}(xy)^{j}}{(q; q)_{j}(-q; q)_{j}}\\
&\overset{\text{set $n=i+j$}}{=}\sum\limits_{n\geq 0}\frac{q^{n^2}x^n}{(q; q)_{n}}\sum\limits_{j=0}^{n}\frac{(q^{-n}; q)_{j}(-xy)^{j}q^{nj+\binom{j+1}{2}}}{(q; q)_{j}(-q; q)_{j}}.\label{eq:new2}
\end{aligned}
\end{equation}
The last step is because
$$
\begin{aligned}
(q^{n-j+1}; q)_{j}q^{j^2}&=(1-q^{n-j+1})(1-q^{n-j+2})\cdots(1-q^n)q^{j^2}\\
&=(-q^{n-j+1})(-q^{n-j+2})\cdots (-q^n)(1-q^{-n+j-1})\cdots(1-q^{-n})q^{j^2}\\
&=(-1)^{j}(q^{-n}; q)_{j}q^{\binom{j+1}{2}+nj}.
\end{aligned}
$$
If $xy=-1$, say $x=u,~y=-u^{-1}$, then 
$$
\text{LHS of \eqref{eq:new2}}=\sum\limits_{i, j\geq 0}\frac{(-1)^{j}u^{i+j}q^{i^2+2ij+2j^2}}{(q; q)_{i}(q^2; q^2)_{j}}=\text{LHS of \eqref{id:(1,2)-1}},
$$
and
$$
\begin{aligned}
\text{RHS of \eqref{eq:new2}}&=\sum\limits_{n\geq 0}\frac{q^{n^2}u^n}{(q; q)_{n}}\sum\limits_{j=0}^{n}\frac{(q^{-n}; q)_{j}q^{nj+\binom{j+1}{2}}}{(q; q)_{j}(-q; q)_{j}} \overset{\text{by \eqref{eq:new1}}}{=}\sum\limits_{n\geq 0}\frac{q^{n^2}u^n}{(q; q)_{n}(-q; q)_{n}}\\
&=\sum\limits_{n\geq 0}\frac{q^{n^2}u^n}{(q^2; q^2)_{n}}\overset{\text{by \eqref{eq:bino cor}}}{=}(-uq; q^2)_{\infty}=\text{RHS of \eqref{id:(1,2)-1}}.
\end{aligned}
$$
So we see that \eqref{eq:new2} specializes to \eqref{id:(1,2)-1} and the proof is complete. 
\end{proof}

\section{A combinatorial proof of Theorem~\ref{thm:Wei-2para}}\label{sec:Wei}

In this section, we aim to construct an involution to prove Theorem \ref{thm:Wei-2para}. Comparing the left hand side of \eqref{id:Wei-2para} with the right hand side of \eqref{gf:l-sol} prompts us to make the following changes of variables. Let $x\rightarrow xq$ and $y\rightarrow yq$ in \eqref{id:Wei-2para}, we have
\begin{equation}
\sum\limits_{i, j\geq 0}\frac{q^{i^2+2ij+2j^2+j}}{(q; q)_{i}(q^2; q^2)_{j}}x^{i}y^{2j}=(yq; q)_{\infty}\sum\limits_{j\geq 0}\frac{(-x/y; q)_{j}}{(q; q)_{j}(yq; q)_{j}}q^{\binom{j+1}{2}}y^{j}.\label{id:m-Wei-2para}
\end{equation}
Linking the left hand side of \eqref{id:m-Wei-2para} to the double sum expression for $D^{\sol,\ell}(x,y;q)$ as given by \eqref{gf:l-sol}, we see that
\begin{align*}
\sum_{\la\in \D}x^{\sol(\la)}y^{\ell(\la)-\sol(\la)}q^{|\la|}=\sum\limits_{i, j\geq 0}\frac{q^{i^2+2ij+2j^2+j}}{(q; q)_{i}(q^2; q^2)_{j}}x^{i}y^{2j}.
\end{align*}
In effect, the left hand side of \eqref{id:m-Wei-2para} can now be viewed as the weighted generating function for all strict partitions: $\sum_{\la\in\D}w(\la)q^{\abs{\la}}$, where the weight $w(\la):=w(\la_1)w(\la_2)\cdots w(\la_m)$, with the weight on each part $\la_i$ given by
\begin{align*}
w(\la_i)=
\begin{cases}
x & \text{if $\la_i$ is the largest part in a sequence of odd length,}\\
y & \text{otherwise.}
\end{cases}
\end{align*}


Meanwhile, the right hand side of \eqref{id:m-Wei-2para} can be rewritten as
\begin{align*}
\text{RHS}=\sum\limits_{j\geq 0}\frac{(y+x)(y+xq)\cdots (y+xq^{j-1})q^{\binom{j+1}{2}}}{(q; q)_{j}}\cdot(yq^{j+1}; q)_{\infty}=\sum_{j\geq 0}A_{j}\cdot B_{j}.
\end{align*}
According to this decomposition, we can interpret the summand $A_jB_j$ as the generating function of the following set of strict weighted partition pairs $(\la,\mu)\in\A_j\times\B_j$.
\begin{itemize}
\item Let $\A_j$ be the set of weighted strict partitions $\la$ with $j$ distinct parts, such that the parts are labeled as either $x$ or $y$, and a part $\la_i$ can be labeled as $x$ only when $\la_{i+1}-\la_i\geq 2$. We make the convention $\la_{j+1}=+\infty$, so that the largest part $\la_j$ can be labeled as either $x$ or $y$. Let the weight $w_A(\la)$ be the product of the labels of all the parts of $\la$. We see that $A_j=\sum_{\la\in\A_j}w_A(\la)q^{\abs{\la}}$.
\item Let $\B_j$ be the set of weighted strict partitions $\mu$ with each part no less than $j+1$, and each part is labeled as $-y$. Let the weight $w_B(\mu)$ be the product of the labels of all the parts of $\mu$. We have that $B_j=\sum_{\mu\in\B_j}w_B(\mu)q^{\abs{\mu}}$.
\end{itemize}

We are going to construct an involution $\theta$ on $\bigcup_{j\ge 0}\A_j\times \B_j$. Before that, we introduce several useful notions.
\begin{Def}
A sequence in a given weighted partition in $\A_j$ is called {\it bad}, if either it has even length and its last part is labeled as $x$, or it has odd length and its last part is labeled as $y$. A sequence that is not bad is called a {\it good} sequence.
\end{Def}
For example, the partition $\la=1_y+2_y+3_y+5_x+8_y+9_x\in\A_6$ has three sequences, among which the first $(1_y,2_y,3_y)$ and the third $(8_y,9_x)$ are the bad ones. The following is a key definition that is best understood in terms of the Ferrers diagram of partitions.

\begin{Def}
Given a partition $\la=\la_{1}+\la_{2}+\cdots+\la_{\ell}$, the {\it $k$-th L-shape} for $1\leq k\leq \ell$ refers to the shaded portion of its Ferrers diagram $[\la]$ as shown below.
\begin{ferrers}
	    \addsketchrows{12+11+9+7+4+3+1}
        \addline{0.5}{0}{0.5}{-1.5}
        \addline{0}{-1.5}{3.5}{-1.5}
        \addline{0}{-1}{4.5}{-1}
        \highlightcellbycolor{1}{1}{black}
        \highlightcellbycolor{2}{1}{black}
        \highlightcellbycolor{3}{1}{black}
        \highlightcellbycolor{3}{2}{black}
           \highlightcellbycolor{3}{3}{black}
        \highlightcellbycolor{3}{4}{black}
        \highlightcellbycolor{3}{5}{black}
        \highlightcellbycolor{3}{6}{black}
                \highlightcellbycolor{3}{7}{black}
                        \highlightcellbycolor{3}{8}{black}
        \highlightcellbycolor{3}{9}{black}
        \addtext{-0.75}{-0.25}{$\lambda_{\ell}\rightarrow$}
        \addtext{-1}{-0.75}{$\vdots$}
\addtext{-0.75}{-1.25}{$\lambda_{k}\rightarrow$}
 \addtext{-1}{-2.25}{$\vdots$}
\addtext{-0.75}{-3.25}{$\lambda_{1}\rightarrow$}
\end{ferrers}
\end{Def}

We denote the size of the $k$-th L-shape as $s_{k}=s_{\la,k}=\la_{k}+\ell-k$. It is worth noting that $s_1,s_2,\ldots,s_{\ell(\la)}$ is a weakly increasing sequence, and $s_a=s_b$ if and only if $\la_a$ and $\la_b$ belong to the same sequence of $\la$.

\begin{Def}
Given a weighted strict partition $\la\in\bigcup_{j\ge 0}\A_j$, we introduce a new statistic $\sfb(\la)$, the size of the L-shape corresponding to the first bad sequence of $\la$. More precisely, if $\la$ has no bad sequences, we let $\sfb(\la):=+\infty$. Otherwise suppose its first bad sequence begins with the part $\la_k$, then we let $\sfb(\la):=s_k$. 
\end{Def}
Now we are ready to construct the involution $\theta$.

\begin{lemma}
There exists an involution 
\begin{align*}
\theta: \bigcup_{j\ge 0}(\A_{j}\times \B_j) &\to \bigcup_{j\ge 0}(\A_{j}\times \B_j)\\
(\la, \mu) &\mapsto (\beta, \gam),
\end{align*}
such that $|\la|+|\mu|=|\beta|+|\gam|$. And when $(\la,\mu)\neq(\beta,\gam)$, we have $w_A(\la)w_B(\mu)=-w_A(\beta)w_B(\gam)$. Consequently, \eqref{id:m-Wei-2para} holds true and Theorem~\ref{thm:Wei-2para} follows.
\end{lemma}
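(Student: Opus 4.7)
The plan is to build $\theta$ by a compare-and-transfer operation governed by the two quantities $\sfb(\la)$ and $\mu_1$, with the conventions $\sfb(\la)=+\infty$ when $\la$ has no bad sequence and $\mu_1=+\infty$ when $\mu=\emptyset$. Fixed points of $\theta$ are declared to be the pairs $(\la,\emptyset)$ with $\sfb(\la)=+\infty$; in such a pair every sequence of $\la$ is good, so its label product equals $x^{\sol(\la)}y^{\ell(\la)-\sol(\la)}$, and summing over fixed points reproduces $\sum_{\la\in\D}x^{\sol(\la)}y^{\ell(\la)-\sol(\la)}q^{|\la|}$, which is the left hand side of \eqref{id:m-Wei-2para} after the $x\mapsto x/y$ specialization of \eqref{gf:l-sol}.

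For non-fixed pairs I use two complementary moves. In the \emph{forward} regime $\sfb(\la)<\mu_1$, let $k$ be the starting position of the first bad sequence of $\la$. Since $\la_k$ is either a bad singleton (necessarily $y$-labelled by the definition of ``bad'') or else the first, hence non-last, part of a longer sequence (and therefore also $y$-labelled), erasing the $k$-th L-shape of $\la$ removes exactly one $y$-factor; do so to obtain $\beta\in\A_{j-1}$, and attach $\sfb(\la)$ as the new smallest part of $\mu$ with label $-y$ to obtain $\gam\in\B_{j-1}$. In the \emph{backward} regime $\sfb(\la)\geq\mu_1$, set $t=\mu_1$ and locate the unique index $k$ with $s_{k-1}<t\leq s_k+1$; insert an L-shape of size $t$ at position $k$ (creating a new $y$-labelled part) and delete $t$ from $\mu$. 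Either move exchanges one $y$-labelled part on the $\la$-side for one $(-y)$-labelled part on the $\mu$-side, immediately yielding $|\la|+|\mu|=|\beta|+|\gam|$ and $w_A(\la)w_B(\mu)=-w_A(\beta)w_B(\gam)$ as required.

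The key verification is that the two moves are mutual inverses, equivalently that forward outputs land in the backward regime and backward outputs land in the forward regime. Using the explicit formulae $s_i^\beta=s_i-1$ for $i<k$ and $s_i^\beta=s_{i+1}-1$ for $i\geq k$ after a forward move, together with the observation that erasing $\la_k$ flips the parity of what remains of the old bad sequence (turning it into a \emph{good} sequence of $\beta$), the first bad sequence of $\beta$ begins at an index $\geq k+m_0-1$, where $m_0$ is the old bad length; since $s_{k+m_0}\geq s_k+1$ (the two L-shapes sit in different sequences of $\la$), one obtains $\sfb(\beta)\geq s_{k+m_0}-1\geq s_k=\gam_1$, placing the image in the backward regime. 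For the other direction, by construction the inserted L-shape becomes the $\sfb$-realising bad sequence of $\beta$, while $\gam_1=\mu_2\geq t+1>\sfb(\beta)$, so the image is in the forward regime.

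The main obstacle will be the boundary behaviour of the backward move. One must distinguish the singleton-insertion subcase ($t<s_k+1$, where the inserted part stays isolated as a new length-one bad sequence) from the right-merge subcase ($t=s_k+1$, where the inserted part joins the good sequence starting at position $k$ of $\la$ and flips its parity to bad), and show that these two subcases invert forward moves on bad sequences of length $1$ and length $\geq 2$ respectively. A careful label and parity check also rules out a would-be left-merge (the only configuration that would force relabelling an existing $x$ to $y$ and thereby spoil the sign bookkeeping): such a configuration never arises as the image of a forward move because $t=\sfb(\la)=s_{k_0}>s_{k_0-1}$ strictly. Once this structural dichotomy is established, $\theta$ is a well-defined involution with the stated properties, the fixed-point generating function matches the left hand side of \eqref{id:m-Wei-2para}, and Theorem~\ref{thm:Wei-2para} follows.
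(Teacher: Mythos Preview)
Your construction coincides with the paper's: its Case~I is your forward regime and its Case~II your backward regime, with the identical L-shape deletion/insertion and the same verification that each case feeds into the other. Two small tightenings are worth making: the insertion index $k$ in the backward move is not literally \emph{unique} (e.g.\ when $s_{k-1}=s_k=t-1$ both $k-1$ and $k$ satisfy your inequalities), so say \emph{smallest} such $k$ as the paper does; and the exclusion of a left-merge should be argued directly from $s_{k-1}\le t-2$ (immediate from that minimality), not by restricting attention to images of forward moves, since that reasoning is circular---well-definedness of the backward move on the whole backward regime is precisely what you are establishing.
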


\begin{proof}
Given a pair of weighted strict partitions $(\la, \mu)\in \A_{j}\times \B_j$, we will compare the values $\sfb(\la)$ and $\mu_1$ (set $\mu_1:=+\infty$ when $\mu$ is the empty partition). If $\sfb(\la)=\mu_1=+\infty$, i.e., $\la$ has no bad sequences and $\mu$ is empty, then we take $(\beta,\gam):=(\la,\mu)$. These pairs are all of the fixed points of $\theta$. Clearly, when $\la$ has no bad sequences, we have $w_A(\la)=w(\la)$. Conversely, each strict partition $\la$ with weight $w(\la)$ can be viewed as a weighted partition in $\A_{\ell(\la)}$ without bad sequences and $w_A(\la)=w(\la)$. So we see that
\begin{align*}
\sum_{(\la,\mu)\in\bigcup_{j\ge 0}\A_j\times\B_j,~\theta(\la,\mu)=(\la,\mu)}w_A(\la)w_B(\mu)q^{\abs{\la}+\abs{\mu}} &=\sum_{\la\in\D}w(\la)q^{\abs{\la}},
\end{align*}
precisely the left hand side of \eqref{id:m-Wei-2para}. It suffices to show that the remaining pairs cancel out each other completely in their weights. Suppose the first bad sequence of $\la$ begins at the part $\la_k$ and recall that $\sfb(\la)=\la_k+\ell(\la)-k$. There are two cases to consider.


\begin{description}
\item[Case I] If $\sfb(\la)<\mu_1$ (including the case that $\mu_1=+\infty$), then we delete the $k$-th L-shape from $\la$ to get a new partition $\beta$, i.e.,
$$
\beta=\beta_1+\cdots+\beta_{j-1}, \text{ where }
\beta_i:=\begin{cases}
\la_i & 1\le i\le k-1,\\
\la_{i+1}-1 & k\le i\le j-1.
\end{cases}
$$
And all parts of $\beta$ receive the same labels as their counterparts in $\la$. A moment of reflection reveals that $\beta$ as defined above is still properly $w_A$-weighted, so $\beta\in\A_{j-1}$. Note that by definition, $\la_k$ must have label $y$ in $\la$, hence $w_A(\la)=yw_A(\beta)$. To get $\gam$, we append $\sfb(\la)$ as a new smallest part to $\mu$, i.e.,
$$
\gam=\gam_1+\cdots+\gam_{\ell(\mu)+1}, \text{ where }
\gam_i:=\begin{cases}
\sfb(\la) & i=1,\\
\mu_{i-1} & 2\le i\le \ell(\mu)+1.
\end{cases}
$$
We see that $\gam$ is a strict partition in $\B_{j-1}$, since $\sfb(\la)\ge s_1=\la_1+j-1\ge j$. We also have $(-y)w_B(\mu)=w_B(\gam)$. To sum up, we have shown that $(\beta, \gam)\in \A_{j-1}\times \B_{j-1}$, $|\la|+|\mu|=|\beta|+|\gam|$, and $w_A(\la)w_B(\mu)=-w_A(\beta)w_B(\gam)$ as desired. 

Moreover, we claim that $\sfb(\beta)\ge\gam_{1}$, so that the image pair $(\beta,\gam)$ is in case II below. Indeed, deleting the $k$-th L-shape from $\la$ and keeping the labels of the remaining parts will turn the bad sequence led by $\la_k$ into a good one (or delete it completely when $\la_k$ is a stand-alone sequence itself), and keep the other sequences intact (i.e., good ones stay good, bad ones stay bad). If $\beta$ has no bad sequences, then $\sfb(\beta)=+\infty$ and our claim holds trivially. If $\beta$ does have bad sequence(s), the first of which must be induced from a bad sequence in $\la$ that immediately follows the bad sequence containing $\la_k$. In other words, there must be a certain $l>k$, such that $\sfb(\beta)=s_l-1\ge s_k=\sfb(\la)=\gam_1$. 


\item[Case II] If $\sfb(\la)\ge \mu_1$ (including the case that $\sfb(\la)=+\infty$), then we remove $\mu_1$ from $\mu$ to get a new partition $\gam\in\B_{j+1}$, and clearly $w_B(\mu)=(-y)w_B(\gam)$.

Next, we describe a unique way to insert $\mu_{1}$ into $\la$ as an L-shape to get our target partition $\beta$. Find the smallest index $l$ such that $s_l=\la_l+j-l\ge \mu_1-1$. Note that $s_k=\sfb(\la)\ge \mu_1$ so such an $l$ must exist\footnote{For the particular case that $\sfb(\la)=+\infty$ and $\la_j<\mu_1-1$, simply append $\mu_1$ to the end of $\la$ as a new largest part with label $y$. The new partition is taken to be $\beta$.}. Now insert $\mu_1$ into $\la$ as the $l$-th L-shape, i.e., we let
\begin{align*}
\beta=\beta_1+\cdots+\beta_{j+1}, \text{ where }
\beta_i:=\begin{cases}
\la_i & 1\le i\le l-1,\\
\mu_1-j+l-1 & i=l,\\
\la_{i-1}+1 & l+1\le i\le j+1.
\end{cases}
\end{align*}
Furthermore, $\beta_i$ receive the same label as $\la_i$ for $1\le i\le l-1$, as $\la_{i-1}$ for $l+1\le i\le j+1$, and the new part $\beta_l$ is labeled as $y$. We trust the reader to verify the following facts.
\begin{enumerate}
  \item $\beta_{l-1}+2\le\beta_l\le\beta_{l+1}-1$ and $\beta$ is indeed a properly $w_A$-weighted strict partition, i.e., $\beta\in\A_{j+1}$.
  \item $w_A(\beta)=yw_A(\la)$.
  \item $\beta_l$ begins a bad sequence in $\beta$ and $\sfb(\beta)=\mu_1<\mu_2=\gam_1$.
\end{enumerate}

These facts are sufficient to yield that $\abs{\la}+\abs{\mu}=\abs{\beta}+\abs{\gam}$, $w_A(\la)w_B(\mu)=-w_A(\beta)w_B(\gam)$, and $(\beta,\gam)\in\A_{j+1}\times\B_{j+1}$ is a pair in case I.

\end{description}

Finally, our constructions in the above two cases are clearly inverse of each other, making $\theta$ an involution with the claimed properties.


\end{proof}


We conclude this section with two examples, one shows the process of applying the involution $\theta$ in both cases, the other displays all correspondences under $\theta$ for partition pairs $(\la,\mu)$ with a fixed value of $\abs{\la}+\abs{\mu}$. For simpler notation, in these examples we assume that parts in $\la$ without a subscript are labeled by $y$, and parts in $\mu$ without a subscript are labeled by $-y$.

\begin{example}
Given $\la=(1, 2, 4, 5, 6, 7_{x}, 9, 10, 11, 12_{x})\in\A_{10}$ and $\mu=(11, 12)\in\B_{10}$, we first note that $(4,5,6,7_x)$ is the first bad sequence in $\la$, so
$$
\sfb(\la)=s_{3}=7+4=11\geq \mu_{1}=11,
$$
and we are in case II. We remove $\mu_1=11$ from $\mu$ to get $\gam=(12)\in\B_{11}$. Next, we realize that the smallest index $l=1$ since $s_1=\la_1+10-1=10\ge \mu_1-1=10$. Thus, we insert $11$ as the first L-shape into $\la$, to derive
$$
\beta=(1, 2, 3, 5, 6, 7, 8_{x}, 10, 11, 12, 13_{x})\in\A_{11}.
$$
Inversely, noting that the first sequence $(1,2,3)$ is already bad in $\beta$, we have $\sfb(\beta)=s_1=1+10=11<\gam_1=12$, which means we are in case I. Hence we delete the first L-shape from $\beta$ and insert $11$ as a new smallest part into $\gam$, rendering the original pair
$$
\la=(1, 2, 4, 5, 6, 7_{x}, 9, 10, 11, 12_{x})\text{ and }\mu=(11, 12).
$$
\end{example}

\begin{example}
Among all partition pairs $(\la,\mu)\in\bigcup_{j\ge 0}(\A_j\times\B_j)$ satisfying $|\la|+|\mu|=6$, there are four fixed points under the involution $\theta$. Namely, $(6_{x}, \emptyset), (2_x+4_x, \emptyset), (1_x+5_x, \emptyset)$, and $(1+2+3_x, \emptyset)$. The remaining ones are paired up via $\theta$ as follows.
\\

\begin{minipage}[c]{0.5\textwidth}
\centering
\begin{tabular}{r|l}
case I & case II\\
$(6, \emptyset)$ & $(\emptyset, 6)$\\
$(1, 2+3)$ & $(\emptyset, 1+2+3)$ \\
$(1, 5)$ & $(\emptyset, 1+5)$\\
$(1_x+5, \emptyset)$ & $(1_x, 5)$\\
$(2, 4)$ & $(\emptyset, 2+4)$\\
$(2_x+4, \emptyset)$ & $(2_x, 4)$\\
\end{tabular}
\end{minipage}
\begin{minipage}[c]{0.5\textwidth}
\centering
\begin{tabular}{r|l}
case I & case II\\
$(2+4, \emptyset)$ & $(3, 3)$\\
$(2+4_x, \emptyset)$ & $(3_x, 3)$\\
$(1+5, \emptyset)$ & $(4, 2)$\\
$(1+5_x, \emptyset)$ & $(4_x, 2)$\\
$(1+2+3, \emptyset)$ & $(1+2, 3)$\\
$(1+2_x, 3)$ & $(1_x, 2+3)$\\
\end{tabular}
\end{minipage}
\end{example}

\section{Partition theoretical interpretations of Andrews-Uncu identities}\label{sec:Andrews-Uncu}

To exploit our combinatorial approach further, we demonstrate in this section how to apply our b$+$i-decomposition to interpret the series sides of \eqref{id:AU-1} and \eqref{id:AU-2}. In fact, we were motivated to go a bit more general and introduce the following notion of $(k,a)$-strict partition.
\begin{Def}
For $k>a\ge 1$, a strict partition is called a {\it $(k,a)$-strict partition} if all of its sequences are of length either $a$ or $k$ modulo $k$. The set of all $(k,a)$-strict partitions is denoted as
\begin{align*}
\D_{k,a}=\{\la\in \D: \text{the length of $\la$'s sequence}\equiv 0, a~(\mod\ k)\}.
\end{align*}
For a given $\la\in\D_{k,a}$, denote $\sl(\la)=\sl_{k,a}(\la)$ the number of sequences in $\la$ whose lengths are congruent to $a$ modulo $k$. We also introduce the following generating function for $(k,a)$-strict partitions.
\begin{align*}
D^{\sl, \ell}_{k,a}(x, y; q):=\sum_{\la\in \D_{k,a}}x^{\sl_{k,a}(\la)}y^{\ell(\la)}q^{|\la|}.
\end{align*}
\end{Def}

Note that $\D_{2,1}=\D$, $\sl_{2,1}(\la)=\sol(\la)$ for any $\la\in\D$, and $D_{2,1}^{\sl,\ell}(x,y;q)=D^{\sol,\ell}(x,y;q)$. Hence the following theorem reduces to Theorem~\ref{thm:main} in the case of $(k,a)=(2,1)$. 

\begin{theorem}\label{thm:main-ka}
We have
\begin{align*}
D^{\sl, \ell}_{k,a}(x, y; q)=\sum_{i, j\geq 0}\frac{x^{i}y^{ai+kj}q^{i^2\binom{a+1}{2}+akij+\binom{kj+1}{2}}}{(q^a; q^a)_i(q^k; q^k)_j}.
\end{align*}
\end{theorem}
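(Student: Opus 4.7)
The plan is to mirror the b$+$i-decomposition proof of Theorem~\ref{thm:main} in this more general $(k,a)$ setting. For $i,j\ge 0$, denote $\D_{k,a}^{i,j}:=\{\la\in \D_{k,a}: \sl_{k,a}(\la)=i,\; \ell(\la)=ai+kj\}$; then the theorem reduces to constructing a weight-preserving bijection
\begin{align*}
\varphi_{k,a}:\{\beta^{(i,j)}_{k,a}\}\times \P^{(a)}_i\times \P^{(k)}_j \;\longrightarrow\; \D_{k,a}^{i,j},
\end{align*}
where $\P^{(a)}_i$ (resp.\ $\P^{(k)}_j$) denotes the set of partitions with at most $i$ (resp.\ $j$) parts, each a multiple of $a$ (resp.\ $k$). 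The base partition $\beta^{(i,j)}_{k,a}$ is taken to be the weight-minimizing element of $\D_{k,a}^{i,j}$: it consists of one long sequence of length $kj+a$ starting at $1$ (which absorbs $j$ consecutive $k$-blocks together with one $a$-block at its top), followed by $i-1$ standalone $a$-blocks, each separated from its predecessor by a gap of $2$. A direct calculation, which specializes at $(k,a)=(2,1)$ to $i^{2}+2ij+2j^{2}+j$ as in Lemma~\ref{lem:main bij}, then confirms
\begin{align*}
|\beta^{(i,j)}_{k,a}|=i^{2}\binom{a+1}{2}+akij+\binom{kj+1}{2},\quad \ell(\beta^{(i,j)}_{k,a})=ai+kj,\quad \sl_{k,a}(\beta^{(i,j)}_{k,a})=i,
\end{align*}
matching the numerator of the summand in the stated identity.

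Next I would generalize the four moves of Table~\ref{tab:4moves} to forward and backward moves on $a$-blocks and $k$-blocks, where a single forward move shifts all parts of the block up by $1$, consuming weight equal to the block's length. The bijection $\varphi_{k,a}$ is then built in two phases. In \emph{Phase I}, the parts of $\mu\in\P^{(a)}_i$ (divided by $a$) are consumed by forward moves on the $i$ length-$a$ blocks of $\beta^{(i,j)}_{k,a}$, processed from the topmost downward; the weak monotonicity of $\mu$ guarantees that after each step the $a$-blocks remain separated by gaps of at least $2$. In \emph{Phase II}, the parts of $\eta\in\P^{(k)}_j$ (divided by $k$) are consumed by forward moves on the $j$ embedded $k$-blocks from top to bottom, with an adjustment whenever a $k$-block would collide with an $a$-block above it.

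The key new ingredient is the form of the adjustment. A single forward move on a $k$-block $[t,\ldots,t+k-1]$ sitting immediately below an $a$-block $[t+k,\ldots,t+k+a-1]$ creates the repeated value $t+k$; the required adjustment replaces the offending configuration by
\begin{align*}
[t,t+1,\ldots,t+a-1],\; [t+a+1,t+a+2,\ldots,t+a+k],
\end{align*}
so the $a$-block effectively slides down by $k$ while the $k$-block leapfrogs up by $a+1$. A short computation confirms the net weight change equals the $k$ contributed by the move; moreover every sequence length remains in $\{0,a\}\pmod k$ (the former length-$(k+a)$ sub-configuration splits into a length-$a$ and a length-$k$ piece, possibly merging with neighbouring blocks, but in all cases preserving the residue condition), and $\sl_{k,a}$ is preserved. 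The inverse $\varphi_{k,a}^{-1}$ is a Phase~II$'$ of backward moves on $k$-blocks with normalizations that undo each adjustment, followed by a Phase~I$'$ of backward moves on $a$-blocks.

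The main technical obstacle is verifying that these adjustments cascade correctly when a single $k$-block must leapfrog several $a$-blocks in succession, and that no two $k$-blocks ever collide during Phase~II. Both issues should be settled by processing the $k$-blocks from the top down and exploiting the weak monotonicity of $\eta$, parallel to the ``no pair overtakes pair'' invariant used in the proof of Lemma~\ref{lem:main bij}. Once these bookkeeping invariants are established, the reversibility of each phase, the preservation of $\ell$ and $\sl_{k,a}$ at every intermediate step, and the weight match $|\la|=|\beta^{(i,j)}_{k,a}|+|\mu|+|\eta|$ all follow exactly as in the proof of Theorem~\ref{thm:main}.
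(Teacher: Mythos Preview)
Your proposal is correct and follows essentially the same approach as the paper: the same base partition $\beta^{(i,j)}_{k,a}$ (the paper writes it with the $k$-blocks and the first $a$-block bracketed separately, but as a partition it coincides with your single length-$(kj+a)$ sequence followed by $i-1$ standalone $a$-blocks), the same incremental partitions into multiples of $a$ and $k$, and the same two-phase scheme of forward moves with adjustments when a $k$-block overtakes an $a$-block. The paper likewise omits the detailed bookkeeping you flag as the ``main technical obstacle,'' deferring to the proof of Lemma~\ref{lem:main bij}, so your level of rigor matches theirs.
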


Let $\T^m$ be the set of partitions $\la $ into multiples of $m$, and $\T^m_n=\{\la\in \T^m: \ell(\la)\leq n\}$ for any $m\in \N$. Denote 
$$
\D_{k, a; i, j}=\{\la\in \D_{k, a}: \sl(\la)=i, \ell(\la)=ai+kj\}.
$$
Similarly, we will use the b+i-decomposition to construct a bijection between the set of triples $(\beta^{(k, a; i, j)}, \mu, \eta)$ and the set $\D_{k, a; i, j}$, where the base partition
\begin{align*}
\beta^{(k, a; i, j)}&=[1, 2,\ldots, k], [k+1, k+2,\ldots, 2k], \cdots, [(j-1)k+1, (j-1)k+2,\ldots, jk], \\
&(jk+1,\ldots, jk+a), (jk+a+2,\ldots, jk+2a+1), \cdots, (jk+(i-1)a+i,\ldots, jk+ia+i-1),
\end{align*}
which is the partition in $\D_{k,a;i,j}$ with the smallest possible weight, and $\mu\in \T^a_i$, $\eta\in \T^k_j$ for $i, j\geq 0$. 

Now we can extend Lemma~\ref{lem:main bij} to the following $(k,a)$-strict partition case, which can be then utilized to establish Theorem~\ref{thm:main-ka} bijectively.

\begin{lemma}\label{lem:amodk bij}
For fixed $k>a\ge 1$, and any given $i, j\geq 0$, there exists a bijection 
\begin{align*}
\varphi_{k, a}=\varphi_{k, a; i, j}: \set{\beta^{(k, a; i, j)}}\times\T^a_i\times\T^k_j &\to \D_{k, a; i, j}\\
(\beta^{(k, a; i, j)},\mu,\eta) &\mapsto \la,
\end{align*}
such that $|\la|=|\beta^{(k, a; i, j)}|+|\mu|+|\eta|$, $\ell(\la)=\ell(\beta^{(k, a; i, j)})$, and $\sl(\la)=\sl(\beta^{(k, a; i, j)})$.
\end{lemma}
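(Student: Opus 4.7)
The plan is to generalize the two-phase b$+$i-decomposition used in Lemma~\ref{lem:main bij}, replacing the pairs by \emph{$k$-blocks} (runs of $k$ consecutive parts, the building blocks of sequences with length $\equiv 0 \pmod k$) and the singletons by \emph{$a$-blocks} (runs of $a$ consecutive parts, the top portion of sequences with length $\equiv a \pmod k$). First I would confirm that $\beta^{(k,a;i,j)}$ is indeed the minimum-weight element of $\D_{k,a;i,j}$ by the same merging argument as in Section~\ref{sec:bij}: any redistribution of parts that keeps $\sl$ and $\ell$ fixed only increases weight. A direct computation then gives $|\beta^{(k,a;i,j)}| = i^2\binom{a+1}{2} + akij + \binom{kj+1}{2}$, matching the numerator of the summand in Theorem~\ref{thm:main-ka}.

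For the forward map $\varphi_{k,a}$, I would define four basic moves: forward/backward on an $a$-block (shift by one unit, weight change $\pm a$) and forward/backward on a $k$-block (shift by one unit, weight change $\pm k$). In Phase~I, the $m$-th $a$-block receives $\mu_m/a$ forward moves for $m = i, i-1, \ldots, 1$; the inequality $\mu_{m-1}\le\mu_m$ together with the base separation of $2$ between consecutive $a$-blocks ensures every intermediate partition is strict and all sequence-length classes modulo $k$ are preserved, so no adjustment is needed. In Phase~II, the $m$-th $k$-block receives $\eta_m/k$ forward moves for $m = j, j-1, \ldots, 1$, and a collision arises precisely when a $k$-block $[s, \ldots, s+k-1]$ is about to step into an adjacent $a$-block beginning at $s+k$. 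The collision is resolved by the leapfrog adjustment
\[
[s, s+1, \ldots, s+k-1],\, [s+k, \ldots, s+k+a-1] \longrightarrow [s, \ldots, s+a-1],\, [s+a+1, s+a+2, \ldots, s+a+k],
\]
which is weight-preserving (a one-line check gives $ka-ka=0$) and specializes to the adjustment of Lemma~\ref{lem:main bij} when $(k,a)=(2,1)$. Before the adjustment the $k$-block and $a$-block together form one sequence of length $k+a\equiv a\pmod k$ (contributing $1$ to $\sl$); afterwards they form two separate sequences of lengths $a$ and $k$ (contributing $1+0=1$), so both $\sl$ and $\ell$ are preserved, while the combined forward-move-plus-adjustment consumes exactly the required $k$ units of weight.

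The inverse $\varphi_{k,a}^{-1}$ is constructed by reversing both phases. Phase~II$'$ backward-moves the $k$-blocks from smallest index to largest, inserting a normalization---the reverse of the leapfrog adjustment---each time a retreating $k$-block passes an $a$-block; after the $m$-th $k$-block returns to its base location $[(m-1)k+1,\ldots,mk]$, one records $\eta_m$ as $k$ times the net number of backward moves performed (with each normalization saving one unit). Phase~I$'$ then backward-moves the now-displaced $a$-blocks to their base positions to recover $\mu\in\T^a_i$.

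The main obstacle will be the bookkeeping when a single $k$-block triggers \emph{cascading} adjustments across multiple $a$-blocks lying in its path. I must verify that (i) after one leapfrog adjustment the right-neighboring $a$-block becomes adjacent to the newly-placed $k$-block, so that the next forward move triggers another adjustment in a predictable way; (ii) the number of normalizations counted in Phase~II$'$ exactly matches the number of adjustments performed in Phase~II, so that the recovered $\eta_m$ is a well-defined multiple of $k$ and $\varphi_{k,a}^{-1}\circ\varphi_{k,a}=\mathrm{id}$; and (iii) at every intermediate partition during both phases, the multiset of sequence-length residues modulo $k$ remains $\{\,\underbrace{a,\ldots,a}_{i},\underbrace{0,\ldots,0}_{j}\,\}$, so that $\sl$ and $\ell$ are globally preserved. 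A local case analysis modelled on the ``$(\text{parts}\le t-1)$'' and ``$(\text{parts}\ge t+4)$'' contexts in the proof of Lemma~\ref{lem:main bij} establishes all three invariants and closes the argument.
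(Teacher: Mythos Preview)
Your proposal is correct and follows essentially the same route as the paper: generalize the two-phase scheme of Lemma~\ref{lem:main bij} by replacing singletons with $a$-blocks and pairs with $k$-blocks, then run forward/backward moves with the analogous adjustment and normalization. One small slip to fix: the displayed ``leapfrog'' is the \emph{combined} forward-move-plus-adjustment (net weight $+k$), not the adjustment alone, so the $ka-ka=0$ check only applies if the left side is the post-move collision state $[\,s+1,\ldots,s+k\,],\,(s+k,\ldots,s+k+a-1)$ rather than $[\,s,\ldots,s+k-1\,],\,(s+k,\ldots,s+k+a-1)$.
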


The proof of Lemma~\ref{lem:amodk bij} is for the most part similar to that of Lemma \ref{lem:main bij}. To avoid repetition, we just point out the differences between the two proofs and provide two examples showing the correspondences. Firstly, the singletons have become sequences of length $a$, and $\mu$ is a partition into multiples of $a$. Therefore, an increment on a sequence of length $a$ incurred by a part, say $ma$, of $\mu$ results in $m$ forward moves with every move adding $1$ to each of the $a$ parts in that sequence. Secondly, the pairs have become sequences of length $k$, and operations on them decided by $\nu$ need to be modified accordingly. To be precise, we write out all four operations explicitly for easier comparison.
\begin{description}
\item[The forward move and adjustment for Lemma \ref{lem:amodk bij}]
$$
\begin{gathered}
(\text { parts } \leq t-1),[ \mathbf{t},...,  \mathbf{t+k-1}], (t+k, t+k+1, ..., t+k+a-1), (\text { parts } \geq t+k+a+1) \\
\qquad \downarrow \text { one forward move } \\
(\text { parts } \leq t-1),[\mathbf{t+1},... \mathbf{t+k}], (t+k, t+k+1, ..., t+k+a-1), (\text { parts } \geq t+k+a+1)\\
\qquad \downarrow \text { adjustment } \\
(\text { parts } \leq t-1), (t, t+1, ..., t+a-1), [\mathbf{t+1+a}, ..., \mathbf{t+k+a}], (\text { parts } \geq t+k+a+1).
\end{gathered}
$$

\item[The backward move and normalization for Lemma \ref{lem:amodk bij}]
$$
\begin{gathered}
(\text { parts } \leq t-1), (t, t+1, ..., t+a-1), [\mathbf{t+1+a}, ..., \mathbf{t+k+a}], (\text { parts } \geq t+k+a+1)\\
\qquad \downarrow \text { one backward move } \\
(\text { parts } \leq t-1), (t, t+1, ..., t+a-1), [\mathbf{t+a}, ..., \mathbf{t+k+a-1}], (\text { parts } \geq t+k+a+1)\\
\qquad \downarrow \text { normalization } \\
(\text { parts } \leq t-1),[ \mathbf{t},...,  \mathbf{t+k-1}], (t+k, t+k+1, ..., t+k+a-1), (\text { parts } \geq t+k+a+1) .
\end{gathered}
$$
\end{description}

\begin{example}
The following correspondences are via $\varphi_{3,1;2,2}$ and $\varphi_{3,2;2;2}$, respectively.
\begin{align*}
&\left(\beta^{(3, 1; 2, 2)}=[1, 2, 3], [4, 5, 6], 7, 9,~\mu=1+2,~\eta=3+6\right)\longleftrightarrow \lambda=(2, 3, 4, 5, 7, 8, 9, 11).\\
&\left(\beta^{(3, 2; 2, 2)}=[1, 2, 3], [4, 5, 6], (7, 8), (10, 11), \mu=2+4, \eta=3+9\right) \\
&\qquad\qquad\qquad\qquad \longleftrightarrow \lambda=(2, 3, 4, 5, 6, 9, 10, 11, 12, 13).
\end{align*}
\end{example}

Next, as a direct application of Theorem~\ref{thm:main-ka}, we see that $D^{\sl,\ell}_{3,1}(-1,-1;q)$ matches the series side of \eqref{id:AU-1}. In terms of partition theorem, we have the following result that is equivalent to \eqref{id:AU-1}. A Franklin-type involutive proof of it is highly desired.
\begin{corollary}\label{cor:AU1 par}
For any $n\ge 1$, the excess of the number of $(3,1)$-strict partitions $\la$ of $n$ with $\ell(\la)+\sl(\la)$ being even over the number of $(3,1)$-strict partitions $\la$ of $n$ with $\ell(\la)+\sl(\la)$ being odd, equals the number of partitions of $n$ into parts congruent to $1$ modulo $3$.
\end{corollary}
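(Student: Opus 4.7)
The plan is to derive Corollary~\ref{cor:AU1 par} as an immediate consequence of Theorem~\ref{thm:main-ka} combined with identity \eqref{id:AU-1}, since the corollary is explicitly stated as a partition-theoretic reformulation of \eqref{id:AU-1}. No new combinatorial construction is required at this stage; indeed the remark right before the corollary already hints that a Franklin-type involutive proof is left open for future work.

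First I would specialize Theorem~\ref{thm:main-ka} by setting $(k,a)=(3,1)$. With $\binom{a+1}{2}=1$ and $\binom{kj+1}{2}=\frac{3j(3j+1)}{2}$, this yields
\begin{align*}
D^{\sl,\ell}_{3,1}(x,y;q) &= \sum_{i,j\ge 0}\frac{x^{i}\,y^{\,i+3j}\,q^{\,i^2+3ij+\frac{3j(3j+1)}{2}}}{(q;q)_i\,(q^3;q^3)_j}.
\end{align*}
Next I substitute $x=y=-1$. On the series side the sign becomes $(-1)^{i}(-1)^{i+3j}=(-1)^{3j}=(-1)^{j}$, so the resulting $q$-series is exactly the left-hand side of \eqref{id:AU-1}. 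On the combinatorial side the specialization produces the signed generating function
\begin{align*}
D^{\sl,\ell}_{3,1}(-1,-1;q)=\sum_{\la\in\D_{3,1}}(-1)^{\sl(\la)+\ell(\la)}\,q^{|\la|}.
\end{align*}

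Invoking \eqref{id:AU-1}, both quantities above equal $1/(q;q^3)_\infty$, which is the standard generating function for partitions whose parts are all congruent to $1$ modulo $3$. Extracting the coefficient of $q^n$ from the identity
\begin{align*}
\sum_{\la\in\D_{3,1}}(-1)^{\sl(\la)+\ell(\la)}\,q^{|\la|}=\frac{1}{(q;q^3)_\infty}
\end{align*}
rewrites the left side as the difference between the number of $\la\in\D_{3,1}(n)$ with $\sl(\la)+\ell(\la)$ even and those with $\sl(\la)+\ell(\la)$ odd, exactly matching the statement of the corollary.

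There is no real obstacle in this derivation: the only point to double-check is the parity bookkeeping $(-1)^i(-1)^{i+3j}=(-1)^j$, together with the correct match of the numerator exponent in Theorem~\ref{thm:main-ka} with the exponent appearing in \eqref{id:AU-1}. The genuine difficulty — explicitly flagged in the paper — is to replace this analytic route via \eqref{id:AU-1} by a direct sign-reversing involution on $\D_{3,1}(n)$ whose fixed point set is in bijection with partitions of $n$ into parts $\equiv 1\pmod 3$, in the spirit of Theorem~\ref{thm:Franklin invo}. That combinatorial proof is left open here.
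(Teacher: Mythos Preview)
Your proposal is correct and follows exactly the paper's own route: specialize Theorem~\ref{thm:main-ka} at $(k,a)=(3,1)$, set $x=y=-1$, match the resulting series with the left-hand side of \eqref{id:AU-1}, and read off the coefficient of $q^n$. Your parity check $(-1)^i(-1)^{i+3j}=(-1)^j$ and exponent match are both fine, and you are right that the paper explicitly leaves the Franklin-type involutive proof open.
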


\begin{example}
There are seven partitions contained in $\D_{3,1}(10)$, among which six have an even value for $\ell(\la)+\sl(\la)$, namely, $10$, $1+9$, $2+8$, $3+7$, $4+6$, and $1+3+6$, while the remaining partition $\mu=1+2+3+4$ has $\la(\mu)+\sl(\mu)=4+1=5$, an odd number. On the other hand, there are $5=6-1$ partitions of $10$ into parts congruent to $1$ modulo $3$. They are $10$, $1^3+7$, $1^2+4^2$, $1^6+4$, and $1^{10}$.
\end{example}

In the same vein, one sees that $D_{3,1}^{\sl,\ell}(-q^{2/3},-q^{1/3};q)$ agrees with the series side of \eqref{id:AU-2}. In order to state a partition theoretical counterpart of \eqref{id:AU-2}, some extra efforts are need. We first change the base partition to the following one. For $i,j\ge 0$, let
\begin{align*}
\beta^{(i, j)}=[2, 2, 3], [5, 5, 6], \cdots, [(3j-1), (3j-1), 3j], (3j+2), (3j+4), \cdots, (3j+2i).
\end{align*}
Comparing this with $\beta^{(3,1;i,j)}$, we see that the first part in each triple $[t,t+1,t+2]$ as well as each singleton $(s)$ have been increased by $1$. This can be achieved by making changes of variables in $D_{3,1}^{\sl,\ell}(x,y;q)$: $x\to xq^{2/3}$, $y\to yq^{1/3}$ (the negative signs will be taken account of by certain signed counting; see Corollary~\ref{cor:AU2 par}). It then brings us new gap conditions between consecutive parts. Namely, we let $\W_{i, j}$ be the set of partitions $\la$ into $i+3j$ parts satisfying that (cf.~the definition of $\C_{i,j}$)
\begin{enumerate}[(1)]
    \item the number of occurrences of each part is at most $2$,
    \item the smallest part is at least $2$,
    \item $\ell_r(\la)=j$, and
    \item for $1\leq k< i+3j$, we have 
      \begin{align*}
      \la_{k+1}-\la_{k}
      \begin{cases}
      = 1 & \text{ if $\la_{k}-\la_{k-1}=0$,}\\
      \geq 2 & \text{ if $\la_k-\la_{k-1}=1$,}\\
      \text{$\ge 2$ or $=0$} & \text{ if $\la_k-\la_{k-1}\ge 2$,}
      \end{cases}
      \end{align*} 
      where we set $\la_0=0$ as a convention.
\end{enumerate}

A bijection between the set of triples $(\beta^{(i, j)}, \mu, \eta)$ and the set $\W_{i, j}$ can be similarly constructed from the following moves.

\begin{description}
\item[The forward move and adjustment]
$$
\begin{gathered}
(\text { parts } \leq t-3),[ \mathbf{t-1}, \mathbf{t-1}, \mathbf{t}], t+2, (\text { parts } \geq t+4) \\
\qquad \downarrow \text { one forward move } \\
(\text { parts } \leq t-3),[\mathbf{t}, \mathbf{t}, \mathbf{t+1}], t+2, (\text { parts } \geq t+4)\\
\qquad \downarrow \text { adjustment } \\
(\text { parts } \leq t-3), t-1, [\mathbf{t+1}, \mathbf{t+1}, \mathbf{t+2}], (\text { parts } \geq t+4).
\end{gathered}
$$

\item[The backward move and normalization]
$$
\begin{gathered}
(\text { parts } \leq t-3), t-1, [ \mathbf{t+1}, \mathbf{t+1}, \mathbf{t+2}], (\text { parts } \geq t+4) \\
\qquad \downarrow \text { one backward move } \\
(\text { parts } \leq t-3), t-1, [\mathbf{t}, \mathbf{t}, \mathbf{t+1}], (\text { parts } \geq t+4)\\
\qquad \downarrow \text { normalization } \\
(\text { parts } \leq t-3), [\mathbf{t-1}, \mathbf{t-1}, \mathbf{t}], t+2, (\text { parts } \geq t+4).
\end{gathered}
$$
\end{description}
 
\begin{lemma}\label{lem:inva of main}
For $i, j\geq 0$, there exists a bijection 
\begin{align*}
\varphi'_{3,1}=\varphi'_{3,1;i,j}: \set{\beta^{(i,j)}}\times\P_i\times\T^3_j &\to \W_{i,j}\\
(\beta^{(i,j)},\mu,\eta) &\mapsto \la,
\end{align*}
such that $|\la|=|\beta^{(i,j)}|+|\mu|+|\eta|$, $\ell(\la)=\ell(\beta^{(i,j)})$, and $\ell_r(\la)=\ell_r(\beta^{(i,j)})$.
\end{lemma}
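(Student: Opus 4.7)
The plan is to follow the b$+$i-decomposition paradigm already executed in Lemmas~\ref{lem:main bij} and~\ref{lem:amodk bij}: first identify $\beta^{(i,j)}$ as the minimum-weight member of $\W_{i,j}$ with $i$ singletons and $j$ repeated triples, then funnel the parts of $\mu\in\P_i$ and $\eta\in\T^3_j$ into $\beta^{(i,j)}$ via the displayed forward moves (and adjustments) in two successive phases. I would first verify directly that $\beta^{(i,j)}\in\W_{i,j}$: its smallest part is $2$; each repeated block $[3k-1,3k-1,3k]$ produces the internal gap pattern $0,1$; and the remaining interfaces (between consecutive triples, between the last triple and the first singleton, between consecutive singletons) are all of size exactly~$2$, so condition~(4) holds throughout. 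Counting parts yields $\ell_r(\beta^{(i,j)})=j$ and $\ell(\beta^{(i,j)})=i+3j$, and summing the parts gives $|\beta^{(i,j)}|=\frac{3j(3j+1)}{2}+i^2+3ij+i+j$, matching the numerator on the series side of \eqref{id:AU-2} after the change of variables $x\mapsto xq^{2/3}$, $y\mapsto yq^{1/3}$ mentioned just before the lemma.

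For the forward map $\varphi'_{3,1}$, Phase~I processes singletons from largest to smallest: apply $\mu_k$ forward moves to the $k$-th singleton $3j+2k$ for $k=i,i-1,\ldots,1$. Since $\mu_k\le\mu_{k+1}$, singletons never collide and no adjustment is triggered in this phase. Phase~II processes triples from largest to smallest: apply $\eta_k/3$ forward moves to the $k$-th triple for $k=j,j-1,\ldots,1$, each move raising all three entries of the triple by $1$ and consuming a weight of $3$. Whenever a forward move produces the forbidden local pattern $[t,t,t+1],t+2$ (which violates condition~(4) because the gap sequence $0,1,1$ is not allowed), the displayed adjustment rewrites it as $(t-1),[t+1,t+1,t+2]$, which is again in $\W_{i,j}$ and leaves both $|\la|$ and $\ell_r(\la)$ unchanged. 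The inverse map runs Phase~II' (backward moves on triples from smallest to largest, with a normalization every time a triple crosses an intervening singleton, recording the net weight decrement as $\eta_k$) followed by Phase~I' (backward moves on singletons, recording $\mu_k$); both the reconstructed $\mu\in\P_i$ and $\eta\in\T^3_j$ are forced by $\la$, so the inverse is well-defined.

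The main obstacle is to verify that at every intermediate step the partition stays in $\W_{i,j}$, specifically the compound gap rule~(4). The crucial local observation is that the requirement $\la_{k+1}-\la_k\ge 2$ whenever $\la_k-\la_{k-1}=1$ forces the part immediately following any block $[u,u,u+1]$ in $\W_{i,j}$ to be at least $u+3$; symmetrically, the part immediately preceding such a block must be at most $u-2$. This is exactly what makes the ambient parts ``$\le t-3$'' and ``$\ge t+4$'' in the displayed adjustment consistent, and it guarantees that after the adjustment the new singleton $t-1$ is at distance $\ge 2$ from whatever precedes it while the new triple $[t+1,t+1,t+2]$ is at distance $\ge 2$ from whatever follows; the symmetric check governs normalization in Phase~II'. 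Termination of each phase, the weight identity $|\la|=|\beta^{(i,j)}|+|\mu|+|\eta|$, and the preservation of $\ell$ and $\ell_r$ then follow as in Lemma~\ref{lem:main bij}, and Phase~I/Phase~I' and Phase~II/Phase~II' are seen to be mutually inverse by tracking how many singletons each triple crosses, exactly as in the prototypical argument.
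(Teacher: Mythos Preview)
Your proposal is correct and follows essentially the same approach as the paper: the paper does not spell out a separate proof of Lemma~\ref{lem:inva of main} but merely displays the relevant forward/backward moves and adjustment/normalization and says the bijection ``can be similarly constructed,'' i.e., by mimicking Lemma~\ref{lem:main bij}. You have in fact supplied more detail than the paper itself---the verification that $\beta^{(i,j)}\in\W_{i,j}$, the weight computation, and especially the local gap analysis showing that the part preceding (resp.\ following) any block $[u,u,u+1]$ in $\W_{i,j}$ is at most $u-2$ (resp.\ at least $u+3$), which is exactly what justifies the ``parts $\le t-3$'' and ``parts $\ge t+4$'' framing in the displayed moves.
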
 

Denote $\W:=\bigcup_{i,j\ge 0}\W_{i,j}$ and $\W(n):=\set{\la\in\W: \la\vdash n}$. We immediately deduce from Lemma~\ref{lem:inva of main} the following double sum generating function.
\begin{align}\label{gf:W-xy}
\sum_{\la\in\W}x^{\ell(\la)-3\ell_r(\la)}y^{\ell(\la)}q^{\abs{\la}}=D_{3,1}^{\sl,\ell}(xq^{2/3},yq^{1/3};q)=\sum_{i,j\ge 0}\frac{x^{i}y^{i+3j}q^{\frac{3j(3j+1)}{2}+i^2+3ij+i+j}}{(q;q)_i(q^3;q^3)_j}.
\end{align}

Setting $x=y=-1$ in \eqref{gf:W-xy}, we derive the following partition theorem that is equivalent to \eqref{id:AU-2}. 
\begin{corollary}\label{cor:AU2 par}
For any $n\ge 1$, the excess of the number of permutations in $\W(n)$ with an even number of repeated parts over the number of permutations in $\W(n)$ with an odd number of repeated parts, equals the number of partitions of $n$ into parts congruent to $2$ or $3$ modulo $6$.
\end{corollary}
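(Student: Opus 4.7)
The plan is to obtain Corollary~\ref{cor:AU2 par} as a direct specialization of the bivariate generating function \eqref{gf:W-xy} furnished by Lemma~\ref{lem:inva of main}, followed by an appeal to identity \eqref{id:AU-2} of Theorem~\ref{thm:AU}. The combinatorial heavy lifting is already encapsulated in Lemma~\ref{lem:inva of main}, so what remains is a short sign-checking exercise plus recognition of the infinite product on the right.

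First I would set $x=y=-1$ in \eqref{gf:W-xy}. The weight contributed by a single $\la\in\W$ on the left becomes $(-1)^{2\ell(\la)-3\ell_r(\la)}=(-1)^{\ell_r(\la)}$, turning the left-hand side into exactly the signed counting $\sum_{\la\in\W}(-1)^{\ell_r(\la)}q^{|\la|}$ that appears in the statement of the corollary. On the right the monomial $x^iy^{i+3j}$ collapses to $(-1)^{2i+3j}=(-1)^j$, so the double sum becomes
\begin{equation*}
\sum_{i,j\ge 0}\frac{(-1)^{j}q^{\frac{3j(3j+1)}{2}+i^2+3ij+i+j}}{(q;q)_i(q^3;q^3)_j},
\end{equation*}
which is verbatim the left-hand side of \eqref{id:AU-2}.

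Second, I would invoke \eqref{id:AU-2} to evaluate this series as $\frac{1}{(q^2;q^6)_\infty(q^3;q^6)_\infty}$, and observe that this product is plainly the generating function for partitions whose parts are congruent to $2$ or $3$ modulo $6$. Comparing coefficients of $q^n$ on both sides yields the corollary. I do not anticipate any obstacle here beyond bookkeeping; the only point that warrants a moment of attention is the parity identity $2\ell(\la)-3\ell_r(\la)\equiv\ell_r(\la)\pmod 2$, which is immediate since $2\ell(\la)$ is even and $3\ell_r(\la)$ has the same parity as $\ell_r(\la)$.
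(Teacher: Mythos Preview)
Your proposal is correct and follows exactly the paper's own route: the corollary is stated immediately after \eqref{gf:W-xy} with the one-line justification ``Setting $x=y=-1$ in \eqref{gf:W-xy}, we derive the following partition theorem that is equivalent to \eqref{id:AU-2}.'' Your additional parity check $(-1)^{2\ell(\la)-3\ell_r(\la)}=(-1)^{\ell_r(\la)}$ and the recognition of $1/(q^2,q^3;q^6)_\infty$ as the generating function for the stated partition class are precisely the bookkeeping the paper leaves implicit.
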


\begin{example}
There are five partitions contained in $\W(10)$, among which four have zero repeated parts, namely, $10$, $2+8$, $3+7$, and $4+6$, while the remaining partition $3+3+4$ has one repeated part. On the other hand, there are $3=4-1$ partitions of $10$ into parts congruent to $2$ or $3$ modulo $6$. They are $2+8$, $2^2+3^2$, and $2^5$.
\end{example}

Viewing the statements of Corollaries \ref{cor:AU1 par} and \ref{cor:AU2 par}, it is tempting to ask for direct Franklin-type involutive proofs of them. Such proofs have eluded us so far and are highly desired.

\section{Conclusion and future works}

Intrigued by a bunch of double sum Rogers-Ramanujan type identities derived by Cao-Wang~\cite{CW23}, Wang-Wang~\cite{WW23}, Wei-Yu-Ruan~\cite{WYR23}, Andrews-Uncu~\cite{AU23}, Chern~\cite{che23}, and Wang~\cite{wan23} respectively using various methods, we introduce in this work a new partition statistic $\sol(\la)$, the number of sequences of odd length in a strict partition $\la$. We also develop a b$+$i-decomposition for strict partitions that preserves the statistic $\sol$. This combinatorial framework enables us to derive partition theoretical interpretations to all of the aforementioned identities. In most cases we manage to devise Franklin-type involutions to prove the identity combinatorially. 

Recall that Euler's odd--distinct partition theorem states that for any non-negative integer $n$, the set of strict partitions of $n$ and the set of partitions of $n$ into odd parts are equinumerous. Our new statistic $\sol$ refines the counting of strict partitions, it is natural to ask if there exists a corresponding statistic for partitions into odd parts, so as to give a refinement of Euler's theorem. We will address this question in a forthcoming paper~\cite{FL24}.
 
Another line of potential future works~\cite{FL25} focuses on Rogers-Ramanujan partitions. These are restricted partitions that arise in MacMahon's partition theoretical interpretation of the series sides of Rogers-Ramanujan identities (see Theorem~\ref{thm:RR-partition}). We shall consider a statistic analogous to $\sol$ and use it to refine the generating function of Rogers-Ramanujan partitions. This approach leads us to new combinatorial proofs of several multi-sum identities originally derived by Wang~\cite{wan22} and Li-Wang~\cite{LW23} via other methods.

\section*{Acknowledgement}
Both authors were partially supported by the National Natural Science Foundation of China grant 12171059 and the Mathematical Research Center of Chongqing University.


\begin{thebibliography}{99}


\bibitem{and68} G.~E.~Andrews, On $q$-difference equations for certain well-poised basic hypergeometric series, Quart. J. Math. Ser. 2 {\bf 19} (1968), 433--447.

\bibitem{andtp} G.~E.~Andrews, {\it The Theory of Partitions}, Cambridge University Press, Cambridge (1998).

\bibitem{and20} G.~E.~Andrews, Sequences in partitions, double $q$-series, and the mock theta functions $\rho_3(q)$, from Algorithmic Combinatorics-Enumerative Combinatorics, Special Functions and Computer Algebra, pp.~25--46, Springer, Cham (2020).

\bibitem{AU23} G.~E.~Andrews and A.~K.~Uncu, {\it Sequences in overpartitions}, The Ramanujan Journal {\bf 61} (2023), 715-729.

\bibitem{CW23} Z.~Cao and L.~Wang, Multi-sum Rogers-Ramanujan type identities, Journal of Mathematical Analysis and Applications {\bf 522} (2023) 126960.

\bibitem{che23} S.~Chern, Asymmetric Rogers-Ramanujan type identities.~I.~The Andrews-Uncu conjecture, Proc.~Amer.~Math.~Soc. {\bf151} (2023), 3269--3279.

\bibitem{FL24} S.~Fu and H.~Li, Sequences of odd length in strict partitions II: the $2$-measure and refinements of Euler's theorem, preprint.

\bibitem{FL25} S.~Fu and H.~Li, Sequences of odd length in strict partitions III: Rogers-Ramanujan partitions and multi-sum identities, preprint.

\bibitem{GR04} G.~Gasper and M.~Rahman, {\it Basic Hypergeometric Series, 2nd edition}, Cambridge University Press, Cambridge (2004).

\bibitem{har37} G.~H.~Hardy, {\it Lectures by Godfrey H. Hardy on the mathematical work of Ramanujan}, Edwards Brothers, Ann Arbor, Michigan, 1937, Fall Term 1936. Notes taken by Marshall Hall at the Institute For Advanced Study, Princeton, NJ.

\bibitem{HW08} G.~H.~Hardy and E.~M.~Wright, {\it An introduction to the theory of numbers, 6th edition}, Oxford University Press, Oxford, 2008.

\bibitem{kur10} K.~Kur\c{s}ung\"{o}z, Parity considerations in Andrews-Gordon identities, European J. Combin. {\bf 31} (2010), 976--1000.

\bibitem{kur19} K.~Kur\c{s}ung\"{o}z, Andrews-Gordon type series for Capparelli's and G\"{o}llnitz-Gordon identities, J. Combin. Theory Ser. A {\bf 165} (2019), 117--138.

\bibitem{kur21} K.~Kur\c{s}ung\"{o}z, Andrews-Gordon type series for Schur's partition identity, Discrete Math. {\bf 344} (2021), 112563.

\bibitem{KS22} K.~Kur\c{s}ung\"{o}z and H.~\"O.~Seyrek, Construction of evidently positive series
and an alternative construction for a family of partition generating functions due to Kanade and Russell, Ann. Comb. {\bf 26} (2022), 903--942.

\bibitem{LW23} Z.~Li and L.~Wang, Rogers-Ramanujan type identities involving double, triple and quadruple sums, The Ramanujan Journal {\bf 65} (2024), 333--390.

\bibitem{lov06} J.~Lovejoy, Constant terms, jagged partitions, and partitions with difference two at distance two, Aequ. Math. {\bf 72} (2006), 299-312.

\bibitem{mac15} P.~A.~MacMahon, {\it Combinatory Analysis}, Cambridge University Press, London (1915/16).


\bibitem{sil18} A.~V.~Sills, {\it An invitation to the Rogers-Ramanujan identities.} CRC Press, Boca Raton (2018).


\bibitem{syl82} J.~J.~Sylvester, A constructive theory of partitions in three acts, an interact and a exodion, Am. J. Math. {\bf 5} (1882), 251--330.

\bibitem{WW23} B.~Wang and L.~Wang, {\it Proofs of Mizuno's conjectures on generalized rank two Nahm sums}, \href{https://arxiv.org/abs/2308.14728}{arXiv:2308.14728}.

\bibitem{wan22} L.~Wang, {\it Identities on Zagier's rank two examples for Nahm's conjecture}, \href{https://arxiv.org/abs/2210.10748v2}{arXiv:2210.10748v2}.

\bibitem{wan23} L.~Wang, New proofs of some double sum Rogers-Ramanujan type identities, Ramanujan J. {\bf62} (2023), 251--272.


\bibitem{WYR23} C.~Wei, Y.~Yu and G.~Ruan, {\it Multidimensional Rogers-Ramanujan type identities with parameters}, \href{https://arxiv.org/abs/2302.00357v3}{arXiv:2302.00357v3}.


\end{thebibliography}
\end{document}